\newtheorem{thm}{Theorem}[section]
\newtheorem{cor}[thm]{Corollary}
\newtheorem{lem}[thm]{Lemma}
\newtheorem{propo}[thm]{Proposition}
\theoremstyle{definition}
\theoremstyle{remark}
\title[A transplantation theorem]{A weighted transplantation theorem for Jacobi coefficients}
\author[A. Arenas]{Alberto Arenas}
\address{Departamento de Matem\'aticas y Computaci\'on,
Universidad de La Rioja, Complejo Cient\'{\i}fico-Tecnol\'ogico,
Calle Madre de Dios 53, 26006 Logro\~no, Spain}
\email{alberto.arenas@unirioja.es}
\author[\'O. Ciaurri]{\'Oscar Ciaurri}
\address{Departamento de Matem\'aticas y Computaci\'on,
Universidad de La Rioja, Complejo Cient\'{\i}fico-Tecnol\'ogico,
Calle MAdre de Dios 53, 26006 Logro\~no, Spain}
\email{oscar.ciaurri@unirioja.es}
\author[E. Labarga]{Edgar Labarga}
\address{Departamento de Matem\'aticas y Computaci\'on,
Universidad de La Rioja, Complejo Cient\'{\i}fico-Tecnol\'ogico,
Calle Madre de Dios 53, 26006 Logro\~no, Spain}
\email{edgar.labarga@unirioja.es}
\keywords{Transplantation, Jacobi coefficients, Jacobi polynomials, discrete vector-valued local Calder\'{o}n-Zygmund theory, weighted norm inequalities}
\subjclass[2010]{Primary: 42C10.  Secondary: 44A20}
\thanks{The first-named author was supported by a predoctoral research grant of the Government of Comunidad Aut\'{o}noma de La Rioja. The second-named author was supported by grant MTM2015-65888-C04-4-P MINECO/FEDER, UE, from Spanish Government. The third-named author was supported by a predoctoral research grant of the University of La Rioja.}
\begin{document}

\begin{abstract}
We present a transplantation theorem for Jacobi coefficients in weighted spaces. In fact, by using a discrete vector-valued local Calder\'{o}n-Zygmund theory, which has recently been furnished, we prove the boundedness of transplantation operators from $\ell^p(\mathbb{N},w)$ into itself, where $w$ is a weight in the discrete Muckenhoupt class $A_{p}(\mathbb{N})$. Moreover, we obtain weighted weak $(1,1)$ estimates for those operators.
\end{abstract}

\maketitle

\section{Introduction}
The Jacobi polynomials $P^{(\alpha,\beta)}_n(z)$ are defined by means of the Rodrigues' formula (see \cite[p.~67, eq.~(4.3.1)]{Szego}) by
\[
(1-z)^{\alpha}(1+z)^{\beta}P_n^{(\alpha,\beta)}(z)=\frac{(-1)^n}{2^n \, n!}\frac{d^n}{dz^n}\left((1-z)^{\alpha+n}(1+z)^{\beta+n}\right),
\]
where $n$ is in $\mathbb{N}=\{0,1,2,\ldots\}$ and $\alpha,\beta>-1$. It is a well-known fact that they are orthogonal on the interval $(-1,1)$ with respect to the measure  $(1-z)^\alpha(1+z)^{\beta}\, dz$.

Let us consider the family of functions\footnote{These functions are sometimes referred to as Jacobi functions. However, this term is confusing and we will drop it.} $\{p_n^{(\alpha,\beta)}(x)\}_{n\ge 0}$ given by
\[
p_n^{(\alpha,\beta)}(x)=w_n^{(\alpha,\beta)}\Big(\sin\frac{x}{2}\Big)^{\alpha+1/2}\Big(\cos\frac{x}{2}\Big)^{\beta+1/2}P_n^{(\alpha,\beta)}(\cos x), \qquad 0< x < \pi,
\]
where $w_n^{(\alpha,\beta)}$ is the normalization factor defined by
\begin{equation*}
w_n^{(\alpha,\beta)}= \sqrt{\frac{(2n+\alpha+\beta+1)\, \Gamma(n+1)\,\Gamma(n+\alpha+\beta+1)}{\Gamma(n+\alpha+1)\,\Gamma(n+\beta+1)}},\qquad n\geq1,
\end{equation*}
and
\[
w_{0}^{(\alpha,\beta)} = \sqrt{\frac{\Gamma(\alpha+\beta+2)}{\Gamma(\alpha+1)\Gamma(\beta+1)}}.
\]
This family is a complete orthonormal system in the space $L^2(0,\pi)$, the set of all measurable and square integrable functions on $(0,\pi)$ with respect to the Lebesgue measure.

In this situation, for each sequence $\{f(n)\}_{n\ge 0}$ in $\ell^2(\mathbb{N})$, we define the $(\alpha,\beta)$-transform $\mathcal{F}_{\alpha,\beta}$ by the identity
\[
\mathcal{F}_{\alpha,\beta}f(x)=\sum_{m=0}^{\infty}f(m)p_m^{(\alpha,\beta)}(x).
\]
It turns out that $\mathcal{F}_{\alpha,\beta}f$ is a function in $L^2(0,\pi)$. In fact, $\mathcal{F}_{\alpha,\beta}$ is an isometry from $\ell^2(\mathbb{N})$ into $L^2(0,\pi)$ and its inverse is given by
\[
\mathcal{F}_{\alpha,\beta}^{-1}h(n)=\int_{0}^{\pi}h(x)p_n^{(\alpha,\beta)}(x)\, dx, \qquad h\in L^2(0,\pi).
\]

Then, taking $f\in \ell^2(\mathbb{N})$, we define the transplantation operator by
\[
T_{\alpha,\beta}^{\gamma,\delta}f(n)=\mathcal{F}_{\gamma,\delta}^{-1}(\mathcal{F}_{\alpha,\beta}f)(n).
\]
Note that $T_{\alpha,\beta}^{\gamma,\delta}$ is an isometric isomorphism on $\ell^{2}(\mathbb{N})$ and becomes the identity operator when $\alpha=\gamma$ and $\beta=\delta$.

The study of transplantation theorems in Fourier analysis has a long and rich history. The first transplantation result \cite{Guy} is due to D. L. Guy and dates back to the early sixties. Several years later, R. Askey and S. Wainger published a collection of works developing the transplantation theory for Jacobi expansions (see \cite{CAskeyWainger,DAskeyWainger,DAskey,CAskey}). The importance of them is that the research done was twofold: the first part deals with the continuous context (see \cite{CAskeyWainger,CAskey}) and the remaining focuses on the discrete counterpart (\cite{DAskeyWainger,DAskey}). The results in the continuous context were extended by B. Muckenhoupt, including weights, in \cite{Muckenhoupt} (see also \cite{CNS} for a different approach). The present document is related to the discrete setting.

Guy's result initiated an intensive research line which is still in progress and in which many important mathematicians were and are involved. The interested reader is urged to consult the excellent survey \cite{Stempaksurvey} (and the references therein), although it is focused mainly on the continuous setting.

It turns out that the discrete setting has considerably less fruitful results. Apart from Askey and Wainger work on ultraspherical coefficients \cite{DAskeyWainger}, Askey on Jacobi coefficients \cite{DAskey}, and the work of K. Stempak \cite{Stempak} on Fourier-Bessel coefficients, there seems to be no noteworthy track in the literature. Recently, J.J. Betancor et al. \cite{Bet-et-al} have generalised the results of Askey and Wainger for ultraspherical coefficients considering general weights. However, the results are not stated in a whole natural range of parameters as in \cite{DAskeyWainger} and some additional restrictions are imposed.

It has already been mentioned that our present work fits into transplantation theory from a discrete point of view. Particularly, our setting is that of Jacobi coefficients and our main result is a transplantation theorem for $\mathcal{T}_{\alpha,\beta}^{\gamma,\delta}$ in weighted $\ell^{p}$-spaces with fairly general weights. In this way, we generalise the transplantation results in \cite{DAskey} and \cite{Bet-et-al} by allowing general weights and a whole natural range of parameters (see the details in Theorem~\ref{thm:trans} below). Our work is highly motivated by those papers, and some ideas appearing in our proofs are inspired by them. In particular, we use the theory developed in \cite{Bet-et-al} concerning discrete vector-valued Calder\'{o}n-Zygmund operators as a crucial tool.

Before formulating our results, we need some previous definitions. A weight on $\mathbb{N}$ will be a strictly positive sequence $w=\{w(n)\}_{n\ge 0}$. We consider the weighted $\ell^{p}$-spaces
\[
\ell^p(\mathbb{N},w)=\left\{f=\{f(n)\}_{n\ge 0}: \|f\|_{\ell^{p}(\mathbb{N},w)}:=\Bigg(\sum_{m=0}^{\infty}|f(m)|^p w(m)\Bigg)^{1/p}<\infty\right\},
\]
$1\le p<\infty$, and the weak weighted $\ell^{1}$-space
\[
\ell^{1,\infty}(\mathbb{N},w)=\left\{f=\{f(n)\}_{n\ge 0}: \|f\|_{\ell^{1,\infty}(\mathbb{N},w)}:=\sup_{t>0}t\sum_{\{m\in \mathbb{N}: |f(m)|>t\}} w(m)<\infty\right\}.
\]
We simply write $\ell^p(\mathbb{N})$ and $\ell^{1,\infty}(\mathbb{N})$ when $w(n)=1$ for all $n\in \mathbb{N}$.

Furthermore, we say that a weight $w(n)$ belongs to the discrete Muckenhoupt $A_p(\mathbb{N})$ class, $1<p<\infty$, provided that
\[
[w]_{A_p(\mathbb{N})}:=\sup_{\begin{smallmatrix} 0\le n \le m \\ n,m\in \mathbb{N} \end{smallmatrix}} \frac{1}{(m-n+1)^p}\Bigg(\sum_{k=n}^mw(k)\Bigg)\Bigg(\sum_{k=n}^mw(k)^{-1/(p-1)}\Bigg)^{p-1} <\infty,
\]
and that $w(n)$ belongs to the discrete Muckenhoupt $A_1(\mathbb{N})$ class if
\[
[w]_{A_1(\mathbb{N})}:=\sup_{\begin{smallmatrix} 0\le n \le m \\ n,m\in \mathbb{N} \end{smallmatrix}} \frac{1}{m-n+1}\Bigg(\sum_{k=n}^mw(k)\Bigg)\max_{n\le k \le m}w(k)^{-1} <\infty,
\]
holds.

The main result of the paper reads as follow.
\begin{thm}
\label{thm:trans}
Let $\alpha,\beta,\gamma,\delta\ge -1/2$, with $\alpha\neq \gamma$ and $\beta\neq\delta$.
\begin{enumerate}
\item[i)]
If $1<p<\infty$ and $w\in A_p(\mathbb{N})$, then
\begin{equation}
\label{eq:bound-trans}
\|T_{\alpha,\beta}^{\gamma,\delta}f\|_{\ell^p(\mathbb{N},w)}\le C \|f\|_{\ell^p(\mathbb{N},w)}, \qquad f\in\ell^2(\mathbb{N})\cap\ell^{p}(\mathbb{N},w),
\end{equation}
where $C$ is a constant independent of $f$. Consequently, the operator $\mathcal{T}_{\alpha,\beta}^{\gamma,\delta}$ extends uniquely to a bounded linear operator from $\ell^p(\mathbb{N},w)$ into itself.
\item[ii)]
If $w\in A_1(\mathbb{N})$, then
\begin{equation}
\label{eq:bound-trans-weak}
\|T_{\alpha,\beta}^{\gamma,\delta}f\|_{\ell^{1,\infty}(\mathbb{N},w)}\le C \|f\|_{\ell^1(\mathbb{N},w)}, \qquad f\in \ell^2(\mathbb{N})\cap\ell^{1}(\mathbb{N},w),
\end{equation}
where $C$ is a constant independent of $f$. Consequently, the operator $\mathcal{T}_{\alpha,\beta}^{\gamma,\delta}$ extends uniquely to a bounded linear operator from $\ell^1(\mathbb{N},w)$ into $\ell^{1,\infty}(\mathbb{N},w)$.
\end{enumerate}
\end{thm}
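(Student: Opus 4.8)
The plan is to realize $T_{\alpha,\beta}^{\gamma,\delta}$ as a discrete singular integral operator and to split it into a part handled by the local Calder\'{o}n-Zygmund machinery of \cite{Bet-et-al} plus a remainder controlled by hand. First I would interchange the series and the integral (justified for finitely supported $f$, which is dense in $\ell^2(\mathbb{N})\cap\ell^p(\mathbb{N},w)$) to obtain the kernel representation
\[
T_{\alpha,\beta}^{\gamma,\delta}f(n)=\sum_{m=0}^{\infty}K(n,m)\,f(m),\qquad K(n,m)=\int_0^{\pi}p_m^{(\alpha,\beta)}(x)\,p_n^{(\gamma,\delta)}(x)\,dx.
\]
Since $\mathcal{F}_{\alpha,\beta}$ is an isometry and $\mathcal{F}_{\gamma,\delta}^{-1}$ its inverse, $T_{\alpha,\beta}^{\gamma,\delta}$ is already an isometric isomorphism of $\ell^2(\mathbb{N})$, as noted above; this supplies exactly the $\ell^2$ boundedness required as input to the Calder\'{o}n-Zygmund theorem, so the entire burden falls on extracting the right kernel estimates.

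The decisive step is the asymptotic analysis of $K(n,m)$. Splitting $(0,\pi)$ into the oscillatory bulk $x\in(c/\max(n,m),\,\pi-c/\max(n,m))$ and the two Bessel-type endpoint zones, and inserting the Hilb/Darboux asymptotics of the functions $p_n^{(\alpha,\beta)}$ (available precisely because $\alpha,\beta,\gamma,\delta\ge-1/2$), the leading contribution of the bulk after a product-to-sum trigonometric reduction is a discrete Hilbert-transform kernel: for $n\neq m$ with $n$ and $m$ comparable one should obtain $K(n,m)=\frac{c_{\alpha,\beta,\gamma,\delta}}{m-n}+E(n,m)$, with $E$ decaying one order faster. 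I would therefore decompose $T_{\alpha,\beta}^{\gamma,\delta}=L+G$ into a local part $L$ carrying the interactions with $m\simeq n$ (say $n/2\le m\le 2n$) and a global part $G$ gathering the rest.

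For the local part I would verify the hypotheses of the discrete vector-valued local Calder\'{o}n-Zygmund theory of \cite{Bet-et-al}: the size bound $|K(n,m)|\lesssim 1/|n-m|$ together with the smoothness bounds $|K(n,m)-K(n',m)|\lesssim |n-n'|/|n-m|^2$ and its symmetric counterpart in $m$, on the comparable range. The smoothness estimates force control of the first differences $p_{n+1}^{(\gamma,\delta)}-p_n^{(\gamma,\delta)}$, where the sharpest asymptotic information is needed. Granting these, the abstract theorem delivers both the $\ell^p(\mathbb{N},w)$ bound for $w\in A_p(\mathbb{N})$ and the weak $(1,1)$ bound for $w\in A_1(\mathbb{N})$ at once. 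For the global part $G$ I would use the faster off-diagonal decay of the kernel, of the form $|K(n,m)|\lesssim (\min(n,m)+1)^{a}/(\max(n,m)+1)^{b}$ with a favourable gap $b-a$, to prove boundedness directly via a Schur-type summation; here the Muckenhoupt hypothesis is exactly what makes the resulting weighted double sum converge.

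The main obstacle is, without doubt, the asymptotic step: producing kernel estimates that are uniform across the three regimes and simultaneously sharp enough in the size inequality and in both smoothness inequalities. Pinning down the correct leading $1/(m-n)$ singularity with a genuinely summable remainder, and in particular controlling differences of Jacobi functions near the turning points $x\simeq c/n$ and $x\simeq \pi-c/n$, is the technical heart on which the clean application of \cite{Bet-et-al} rests.
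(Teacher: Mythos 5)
Your overall architecture (kernel representation, $\ell^{2}$ isometry as the input boundedness, then the local Calder\'on--Zygmund theorem of \cite{Bet-et-al}) is the right one and matches the paper, but there is a genuine obstruction at the point you identify as the technical heart, and your plan as written does not get around it. You propose to verify the regularity condition through first differences, i.e.\ to bound $K(n+1,m)-K(n,m)$ via $p_{n+1}^{(\gamma,\delta)}-p_{n}^{(\gamma,\delta)}$. This cannot work for the full kernel: since $p_{n}^{(a,b)}(\pi-z)=(-1)^{n}p_{n}^{(b,a)}(z)$, the contribution to $K(n,m)$ from a neighbourhood of $x=\pi$ equals $(-1)^{n+m}$ times an integral which is itself generically of size $1/|n-m|$ (it is the same kind of object as the bulk contribution, with the roles of the parameters swapped). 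Replacing $n$ by $n+1$ flips that sign, so near $x=\pi$ the first difference $p_{n+1}^{(\gamma,\delta)}-p_{n}^{(\gamma,\delta)}$ behaves like a \emph{sum}, is $O(1)$ rather than $o(1)$, and $K(n+1,m)-K(n,m)$ is only $O(1/|n-m|)$: the smoothness hypothesis (b1) fails for $K$ itself. The paper's remedy is structural, not technical: it splits $T_{\alpha,\beta}^{\gamma,\delta}$ into four operators according to the parities of $n$ and $m$, so that the regularity conditions for the new kernels reduce to \emph{second} differences $K(n\pm 2,m)-K(n,m)$, for which $p_{n+2}^{(a,b)}-p_{n}^{(a,b)}$ stays a genuine difference at both endpoints (Lemmas~\ref{lem:bound-diff} and~\ref{lem:bound-diff-der}); the four pieces are then reassembled using that $w(2n)$ and $w(2n+1)$ are again $A_{p}(\mathbb{N})$ weights and that $w(n)\simeq w(n+1)$ (Lemma~\ref{lem:weight}). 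Your proposal contains no mechanism playing this role, and without it the application of Theorem~\ref{thm:CZ} is not legitimate.

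Two further, more minor, divergences. First, the paper never extracts a Hilb-type asymptotic expansion $K(n,m)=c/(m-n)+E(n,m)$; it obtains the size and regularity bounds from the Sturm--Liouville structure, writing $K_{2}(n,m)=\bigl(S(n,m)+J(n,m)\bigr)/\bigl(\lambda_{n}^{(\gamma,\delta)}-\lambda_{m}^{(\alpha,\beta)}\bigr)$ by Green's identity and using only the uniform bound \eqref{eq:unif-bound}. Your asymptotic route is the classical Askey--Wainger one and could in principle be carried out, but it is considerably heavier near the turning points and you have not supplied the estimates. Second, the decomposition $T=L+G$ with a separate Schur/Hardy argument for the global part is unnecessary: in the local Calder\'on--Zygmund theory being invoked, the regularity is only required on the band $m/2\le n,l\le 2m$, while the size condition $|K(n,m)|\le C/|n-m|$ alone controls the far off-diagonal interactions, so the theorem applies to the whole (parity-restricted) operator at once.
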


An immediate consequence of this theorem is the following result.
\begin{cor}
Let $1<p<\infty$, $w\in A_{p}(\mathbb{N})$, and $\alpha,\beta,\gamma,\delta\ge -1/2$, with $\alpha\neq \gamma$ and $\beta\neq\delta$. There exist a constant $C$ such that
\begin{equation*}
\frac{1}{C} \| f\|_{\ell^{p}(\mathbb{N},w)} \leq \|T_{\alpha,\beta}^{\gamma,\delta} f\|_{\ell^{p}(\mathbb{N},w)} \leq C \| f\|_{\ell^{p}(\mathbb{N},w)},\qquad f\in \ell^{p}(\mathbb{N},w).
\end{equation*}
\end{cor}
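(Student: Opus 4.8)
The upper estimate is precisely part i) of Theorem~\ref{thm:trans}, so the only content to establish is the lower bound. The plan is to exploit the fact that the transplantation operators come in inverse pairs. Observe first that the hypotheses of the corollary are symmetric under interchanging the pair $(\alpha,\beta)$ with $(\gamma,\delta)$: both lie in $[-1/2,\infty)^2$ and the conditions $\alpha\neq\gamma$, $\beta\neq\delta$ are unaffected by the swap. Hence Theorem~\ref{thm:trans} applies verbatim to $T_{\gamma,\delta}^{\alpha,\beta}$ as well, yielding a constant $C$ with
\[
\|T_{\gamma,\delta}^{\alpha,\beta}g\|_{\ell^p(\mathbb{N},w)}\le C\,\|g\|_{\ell^p(\mathbb{N},w)},\qquad g\in\ell^p(\mathbb{N},w).
\]

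Next I would record the inverse relation. Directly from the definition $T_{\alpha,\beta}^{\gamma,\delta}f=\mathcal{F}_{\gamma,\delta}^{-1}(\mathcal{F}_{\alpha,\beta}f)$ and the fact that each $\mathcal{F}_{\alpha,\beta}$ is an isometric isomorphism of $\ell^2(\mathbb{N})$ onto $L^2(0,\pi)$ with inverse $\mathcal{F}_{\alpha,\beta}^{-1}$, one computes on $\ell^2(\mathbb{N})$ that
\[
T_{\gamma,\delta}^{\alpha,\beta}\,T_{\alpha,\beta}^{\gamma,\delta}f
=\mathcal{F}_{\alpha,\beta}^{-1}\big(\mathcal{F}_{\gamma,\delta}\mathcal{F}_{\gamma,\delta}^{-1}(\mathcal{F}_{\alpha,\beta}f)\big)
=\mathcal{F}_{\alpha,\beta}^{-1}(\mathcal{F}_{\alpha,\beta}f)=f.
\]
Thus $T_{\gamma,\delta}^{\alpha,\beta}$ is a two-sided inverse of $T_{\alpha,\beta}^{\gamma,\delta}$ on $\ell^2(\mathbb{N})$.

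With these two ingredients the lower bound is immediate. For $f\in\ell^2(\mathbb{N})\cap\ell^p(\mathbb{N},w)$ I set $g=T_{\alpha,\beta}^{\gamma,\delta}f$ and apply the boundedness of $T_{\gamma,\delta}^{\alpha,\beta}$ together with the inverse relation to obtain
\[
\|f\|_{\ell^p(\mathbb{N},w)}=\|T_{\gamma,\delta}^{\alpha,\beta}\,T_{\alpha,\beta}^{\gamma,\delta}f\|_{\ell^p(\mathbb{N},w)}\le C\,\|T_{\alpha,\beta}^{\gamma,\delta}f\|_{\ell^p(\mathbb{N},w)},
\]
which is the desired left-hand inequality, possibly after enlarging $C$ so that a single constant serves for both bounds.

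The only genuine subtlety, and the step I would treat with some care, is that the composition identity was derived on $\ell^2(\mathbb{N})$, whereas the statement is for arbitrary $f\in\ell^p(\mathbb{N},w)$. I would resolve this by a density argument: the finitely supported sequences lie in $\ell^2(\mathbb{N})\cap\ell^p(\mathbb{N},w)$ and are dense in $\ell^p(\mathbb{N},w)$, while by Theorem~\ref{thm:trans} both $T_{\alpha,\beta}^{\gamma,\delta}$ and $T_{\gamma,\delta}^{\alpha,\beta}$ extend to bounded operators on $\ell^p(\mathbb{N},w)$; since the bounded operator $T_{\gamma,\delta}^{\alpha,\beta}T_{\alpha,\beta}^{\gamma,\delta}$ coincides with the identity on this dense subspace, it equals the identity on all of $\ell^p(\mathbb{N},w)$. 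This upgrades the inverse relation to the full space and completes the argument; no new hard estimate beyond Theorem~\ref{thm:trans} is needed.
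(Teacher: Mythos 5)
Your argument is correct and is exactly the intended one: the paper labels the corollary an ``immediate consequence'' of Theorem~\ref{thm:trans}, meaning precisely that the upper bound is part~i) and the lower bound follows by applying part~i) to the inverse operator $T_{\gamma,\delta}^{\alpha,\beta}$, whose hypotheses are symmetric in the two parameter pairs. Your care with the density step (upgrading $T_{\gamma,\delta}^{\alpha,\beta}T_{\alpha,\beta}^{\gamma,\delta}=\mathrm{Id}$ from $\ell^2(\mathbb{N})$ to all of $\ell^p(\mathbb{N},w)$) is a detail the paper leaves implicit, and it is handled correctly.
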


Our main result is, in fact, a transplantation theorem. It extends \cite[Theorem 1]{CAskey} including general weights (in that paper only the weights of the form $(n+1)^\sigma$ were considered) and functions $F\in L^q_{v}(0,\pi)$, where $L^q_v(0,\pi)$  is the set of measurable functions such that the norm
\[
\|F\|_{L^q_v(0,\pi)}=\left(\int_{0}^{\pi}|F(x)|^qv(x)\, dx\right)^{1/q},\qquad 1<q<\infty,
\]
is finite. It is known (see \cite{Ker,ABetal}) that
\begin{equation}
\label{eq:conv}
S^{(\alpha,\beta)}_MF=\sum_{m=0}^M a_m^{(\alpha,\beta)}(F)p_m^{(\alpha,\beta)}\longrightarrow F \qquad \text{ in $L^q_v(0,\pi)$}
\end{equation}
for each $F\in L^q_v(0,\pi)$, where
\[
a_m^{(\alpha,\beta)}(F)=\int_{0}^{\pi} F(x)p_m^{(\alpha,\beta)}(x)\, dx
\]
and $v\in A_q(0,\pi)$. By means of a duality argument, from \eqref{eq:conv}, we obtain that
\[
\int_{0}^{\pi}v^{1-q'}(x)\,dx<\infty,
\]
where $1/q+1/q'=1$. Then, from this fact and using the bound $|p_m^{(\gamma,\delta)}(x)|\le C$ for $\gamma,\delta\ge -1/2$ (see \eqref{eq:unif-bound} below), we deduce that
\[
a_n^{(\gamma,\delta)}(F)=\lim_{M\to \infty}\int_{0}^{\pi} S_M^{(\alpha,\beta)}F(x)p_n^{(\gamma,\delta)}(x)\, dx.
\]
In this way, by using \eqref{eq:trans} below,
\[
a_n^{(\gamma,\delta)}(F)=\lim_{M\to \infty}\sum_{m=0}^{M} a_m^{(\alpha,\beta)}(F)\int_{0}^{\pi}p_n^{(\gamma,\delta)}(x)p_m^{(\alpha,\beta)}(x)\, dx=
T_{\alpha,\beta}^{\gamma,\delta}f(n),
\]
where $f(n)=a_n^{(\alpha,\beta)}(F)$. The previous argument proves the following corollary.

\begin{cor}
Let $\alpha,\beta,\gamma,\delta\ge -1/2$, with $\alpha\neq \gamma$ and $\beta\neq\delta$, $1<p,q<\infty$, $v\in A_q(0,\pi)$, and $w\in A_p(\mathbb{N})$. There exist a constant $C$ such that
\begin{equation*}
\frac{1}{C} \| a_n^{(\alpha,\beta)}(F)\|_{\ell^{p}(\mathbb{N},w)} \leq \| a_n^{(\gamma,\delta)}(F)\|_{\ell^{p}(\mathbb{N},w)} \leq C \| a_n^{(\alpha,\beta)}(F)\|_{\ell^{p}(\mathbb{N},w)},\qquad F\in L_v^q(0,\pi).
\end{equation*}
\end{cor}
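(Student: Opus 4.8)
The plan is to derive the statement from the norm equivalence for the discrete transplantation operator in the first Corollary, by means of the transference identity obtained in the discussion preceding the statement. Recall that identity: combining the convergence \eqref{eq:conv} of the partial sums $S_M^{(\alpha,\beta)}F$ in $L^q_v(0,\pi)$, its duality consequence $\int_0^{\pi}v^{1-q'}(x)\,dx<\infty$, the uniform bound $|p_n^{(\gamma,\delta)}(x)|\le C$ from \eqref{eq:unif-bound}, and the expansion \eqref{eq:trans}, one obtains
\[
a_n^{(\gamma,\delta)}(F)=T_{\alpha,\beta}^{\gamma,\delta}f(n),\qquad f(n)=a_n^{(\alpha,\beta)}(F).
\]
Since the hypotheses on the parameters are symmetric in the pairs $(\alpha,\beta)$ and $(\gamma,\delta)$, the same reasoning with the two pairs interchanged yields the companion identity $a_n^{(\alpha,\beta)}(F)=T_{\gamma,\delta}^{\alpha,\beta}g(n)$, where $g(n)=a_n^{(\gamma,\delta)}(F)$; this is coherent because $T_{\gamma,\delta}^{\alpha,\beta}=\mathcal{F}_{\alpha,\beta}^{-1}\mathcal{F}_{\gamma,\delta}$ is the inverse of $T_{\alpha,\beta}^{\gamma,\delta}$.

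With the identity in hand, I would invoke the first Corollary. When $f=\{a_n^{(\alpha,\beta)}(F)\}$ belongs to $\ell^p(\mathbb{N},w)$ it gives at once
\[
\frac1C\|a_n^{(\alpha,\beta)}(F)\|_{\ell^p(\mathbb{N},w)}\le\|a_n^{(\gamma,\delta)}(F)\|_{\ell^p(\mathbb{N},w)}\le C\|a_n^{(\alpha,\beta)}(F)\|_{\ell^p(\mathbb{N},w)},
\]
upon replacing $T_{\alpha,\beta}^{\gamma,\delta}f(n)$ by $a_n^{(\gamma,\delta)}(F)$; this is exactly the asserted double inequality.

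The one point requiring care is the a priori membership of the coefficient sequences in $\ell^p(\mathbb{N},w)$, since the first Corollary presupposes $f\in\ell^p(\mathbb{N},w)$. I would settle this symmetrically: if either of the sequences $\{a_n^{(\alpha,\beta)}(F)\}$ or $\{a_n^{(\gamma,\delta)}(F)\}$ lies in $\ell^p(\mathbb{N},w)$, then the two transference identities, together with the boundedness of $T_{\alpha,\beta}^{\gamma,\delta}$ and of its inverse $T_{\gamma,\delta}^{\alpha,\beta}$ on $\ell^p(\mathbb{N},w)$ guaranteed by Theorem~\ref{thm:trans}(i), force the other sequence into $\ell^p(\mathbb{N},w)$ with comparable norm; and if neither does, both norms are infinite and the inequality holds trivially. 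I expect this bookkeeping to be essentially the only obstacle, since the whole analytic weight of the result has already been spent on Theorem~\ref{thm:trans} and on justifying the limit interchange behind the transference identity, so that the Corollary follows in a purely formal way.
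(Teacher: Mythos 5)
Your proposal is correct and follows essentially the same route as the paper: the transference identity $a_n^{(\gamma,\delta)}(F)=T_{\alpha,\beta}^{\gamma,\delta}f(n)$ with $f(n)=a_n^{(\alpha,\beta)}(F)$ is exactly the argument the authors give in the discussion preceding the corollary, and the double inequality then follows from the first Corollary. Your extra remark on the a priori membership of the coefficient sequences in $\ell^p(\mathbb{N},w)$ is a point the paper leaves implicit, and your symmetric treatment of it is sound.
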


\section{Proof of Theorem \ref{thm:trans}}
The main tool to prove Theorem \ref{thm:trans} is a discrete local Calder\'on-Zygmung theory developed in \cite{Bet-et-al}. For the reader's convenience, it is appropriate to recall some of the basic aspects of this theory.

Suppose that $\mathbb{B}_1$ and $\mathbb{B}_2$ are Banach spaces. We denote by $\mathcal{L}(\mathbb{B}_1,\mathbb{B}_2)$ the space of bounded linear operators from $\mathbb{B}_1$ into $\mathbb{B}_2$. Let us suppose that
\[
K:(\mathbb{N}\times\mathbb{N})\setminus D \longrightarrow \mathcal{L}(\mathbb{B}_1,\mathbb{B}_2),
\]
where $D:=\{(n,n):n\in \mathbb{N}\}$, is measurable and that for certain positive constant $C$ and for each $n$, $m$, $l\in \mathbb{N}$, $n\neq m$, the following conditions hold.
\begin{enumerate}
\item[(a)] the size condition:
\[
\|K(n,m)\|_{\mathcal{L}(\mathbb{B}_1,\mathbb{B}_2)}\le \frac{C}{|n-m|},
\]
\item[(b)] the regularity properties:
\begin{enumerate}
\item[(b1)]
\[
\|K(n,m)-K(l,m)\|_{\mathcal{L}(\mathbb{B}_1,\mathbb{B}_2)}\le C \frac{|n-l|}{|n-m|^2},\quad |n-m|>2|n-l|, \frac{m}{2}\le n,l\le 2m,
\]
\item[(b2)]
\[
\|K(m,n)-K(m,l)\|_{\mathcal{L}(\mathbb{B}_1,\mathbb{B}_2)}\le C \frac{|n-l|}{|n-m|^2},\quad |n-m|>2|n-l|, \frac{m}{2}\le n,l\le 2m.
\]
\end{enumerate}
\end{enumerate}
A kernel $K$ satisfying conditions (a) and (b) is called a local $\mathcal{L}(\mathbb{B}_1,\mathbb{B}_2)$-standard kernel. For a Banach space $\mathbb{B}$ and a weight $w=\{w(n)\}_{n\ge 0}$, we consider the space
\[
\ell^{p}_{\mathbb{B}}(\mathbb{N},w)=\left\{ \text{$\mathbb{B}$-valued sequences } f=\{f(n)\}_{n\ge 0}: \{\|f(n)\|_{\mathbb{B}}\}_{n\ge 0}\in \ell^p(\mathbb{N},w)\right\}
\]
for $1\le p<\infty$, and
\[
\ell^{1,\infty}_{\mathbb{B}}(\mathbb{N},w)=\left\{ \text{$\mathbb{B}$-valued sequences } f=\{f(n)\}_{n\ge 0}: \{\|f(n)\|_{\mathbb{B}}\}_{n\ge 0}\in \ell^{1,\infty}(\mathbb{N},w)\right\}.
\]
As usual, we simply write $\ell_{\mathbb{B}}^r(\mathbb{N})$ and $\ell^{1,\infty}_{\mathbb{B}}(\mathbb{N})$ when $w(n)=1$ for all $n\in \mathbb{N}$. Also, by $\mathbb{B}_0^{\mathbb{N}}$ we represent the space of $\mathbb{B}$-valued sequences $f=\{f(n)\}_{n\ge 0}$ such that $f(n)=0$, with $n>j$, for some $j\in \mathbb{N}$.

The next theorem is an extension result for Calder\'{o}n-Zygmund operators with standard kernels.
\begin{thm}[Theorem 2.1 in \cite{Bet-et-al}]
\label{thm:CZ}
Let $\mathbb{B}_1$ and $\mathbb{B}_2$ be Banach spaces. Suppose that $T$
is a linear and bounded operator from $\ell_{\mathbb{B}_1}^r(\mathbb{N})$ into $\ell_{\mathbb{B}_2}^r(\mathbb{N})$, for some $1<r<\infty$, and such that there exists a local $\mathcal{L}(\mathbb{B}_1,\mathbb{B}_2)$-standard kernel $K$ such that, for every sequence $f\in (\mathbb{B}_1)_0^{\mathbb{N}}$,
\[
Tf(n)=\sum_{m=0}^{\infty}K(n,m)\cdot f(m),
\]
for every $n\in \mathbb{N}$ such that $f(n)=0$. Then,
\begin{enumerate}
\item[(i)] for every $1< p <\infty$ and $w\in A_p(\mathbb{N})$ the operator $T$
can be extended from $\ell_{\mathbb{B}_1}^r(\mathbb{N})\cap \ell_{\mathbb{B}_1}^p(\mathbb{N},w)$
to $\ell_{\mathbb{B}_1}^p(\mathbb{N},w)$ as a bounded operator from $\ell_{\mathbb{B}_1}^p(\mathbb{N},w)$ into $\ell_{\mathbb{B}_2}^p(\mathbb{N},w)$.

\item[(ii)] for every $w\in A_1(\mathbb{N})$ the operator $T$
can be extended from $\ell_{\mathbb{B}_1}^r(\mathbb{N})\cap \ell_{\mathbb{B}_1}^1(\mathbb{N},w)$
to $\ell_{\mathbb{B}_1}^1(\mathbb{N},w)$ as a bounded operator from $\ell_{\mathbb{B}_1}^p(\mathbb{N},w)$ into $\ell_{\mathbb{B}_2}^{1,\infty}(\mathbb{N},w)$.
\end{enumerate}
\end{thm}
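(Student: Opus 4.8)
The plan is to deduce this from the \emph{classical} (global) Calder\'on--Zygmund theory by exploiting the fact that the only extra hypothesis, the localization $m/2\le n,l\le 2m$ in (b1), (b2), can be absorbed into a splitting of $T$ adapted to the multiplicative structure of $\mathbb{N}$. Since \eqref{eq:bound-trans}-type estimates are a priori inequalities on a dense class, I would first restrict to finitely supported sequences $f\in(\mathbb{B}_1)_0^{\mathbb{N}}$ and recover the general statement by the usual limiting argument. Fix a Lipschitz function $\eta$ with $\eta\equiv 1$ on $[2/3,3/2]$ and $\operatorname{supp}\eta\subset[1/2,2]$, and write $T=T_{\mathrm{loc}}+T_{\mathrm{glob}}$, where $T_{\mathrm{glob}}$ is the kernel operator with kernel $K(n,m)\bigl(1-\eta(m/n)\bigr)$ and $T_{\mathrm{loc}}:=T-T_{\mathrm{glob}}$. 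The reason for a \emph{smooth} rather than a sharp cutoff is that it spreads the transition over a whole dyadic annulus, so that the regularity of $K$ is not destroyed at the boundary of the comparability region. Note that $T_{\mathrm{loc}}f$ is again finitely supported, which will legitimize the use of sharp maximal function equivalences below.

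The first main step is to dispose of $T_{\mathrm{glob}}$ by scalar domination. On $\operatorname{supp}\bigl(1-\eta(m/n)\bigr)$ one has $|n-m|\gtrsim n+m$, so the size condition (a) gives $\|K(n,m)\bigl(1-\eta(m/n)\bigr)\|_{\mathcal{L}(\mathbb{B}_1,\mathbb{B}_2)}\le C/(n+m)$ and hence
\[
\|T_{\mathrm{glob}}f(n)\|_{\mathbb{B}_2}\le C\sum_{m\ge 0}\frac{\|f(m)\|_{\mathbb{B}_1}}{n+m}\le C\bigl(Ag(n)+A^{*}g(n)\bigr),\qquad g:=\{\|f(m)\|_{\mathbb{B}_1}\}_{m\ge 0},
\]
where $Ag(n)=\frac{1}{n+1}\sum_{m\le n}g(m)$ is the discrete Hardy averaging operator and $A^{*}g(n)=\sum_{m\ge n}g(m)/(m+1)$ its $\ell^2$-adjoint. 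Here the presence of the origin in $\mathbb{N}$ is essential: $Ag(n)$ is exactly an average over $[0,n]$, so $Ag\le Mg$ pointwise. Thus the weighted bounds for $T_{\mathrm{glob}}$ follow from the boundedness of the Hardy--Littlewood maximal operator $M$ on $\ell^p(\mathbb{N},w)$ for $w\in A_p(\mathbb{N})$ (for the term $A$), together with duality applied to the conjugate weight $w^{1-p'}\in A_{p'}(\mathbb{N})$ (for $A^{*}$); the weak $(1,1)$ bound for $w\in A_1(\mathbb{N})$ is obtained in the same way, using that $A^{*}g$ is nonincreasing so that $\{A^{*}g>t\}=[0,N_t]$ and the $A_1$ condition on $[0,N_t]$ closes the estimate. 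The Banach-space valuedness plays no role here, as everything is majorized by the scalar operators applied to $g$.

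The second step is to recognize $T_{\mathrm{loc}}$ as a genuine global Calder\'on--Zygmund operator. From the hypothesis that $T$ is bounded on $\ell^{r}_{\mathbb{B}_1}(\mathbb{N})\to\ell^{r}_{\mathbb{B}_2}(\mathbb{N})$ and the $\ell^r$ boundedness of $T_{\mathrm{glob}}$ just proved (take $w\equiv 1\in A_r(\mathbb{N})$), the operator $T_{\mathrm{loc}}$ is bounded on $\ell^{r}$. Its kernel $K_{\mathrm{loc}}(n,m)=K(n,m)\eta(m/n)$ is supported where $n$ and $m$ are comparable; there the hypotheses (a), (b1), (b2) hold \emph{unconditionally}, and the contribution of the increments of $\eta$ lives where $|n-m|\asymp n$, so $K_{\mathrm{loc}}$ satisfies the size and regularity estimates \emph{globally}, i.e.\ it is an ordinary $\mathcal{L}(\mathbb{B}_1,\mathbb{B}_2)$-standard kernel on the space of homogeneous type $(\mathbb{N},|\cdot|,\#)$. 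I would then run the standard vector-valued argument for $T_{\mathrm{loc}}$: a Calder\'on--Zygmund decomposition at height $\lambda$ gives the unweighted weak $(1,1)$ estimate, and the weighted $\ell^p$ bounds follow from the pointwise Fefferman--Stein inequality $M^{\#}_{\delta}(T_{\mathrm{loc}}f)(n)\le C\,M(\|f(\cdot)\|_{\mathbb{B}_1})(n)$ for some $0<\delta<1$ (the local part of this estimate uses the weak $(1,1)$ bound through Kolmogorov's inequality, the far part uses the genuine regularity, which now yields the summable quadratic decay $|k-k_0|/|k_0-m|^2$ on the restricted support), combined with the equivalence $\|\cdot\|_{\ell^p(w)}\approx\|M^{\#}_{\delta}(\cdot)\|_{\ell^p(w)}$ for $w\in A_\infty(\mathbb{N})$ and the $A_p$ boundedness of $M$. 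Adding the two pieces yields (i) and (ii) for $T$.

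The crux of the argument — the feature that makes the theorem \emph{local} rather than a verbatim instance of the classical theory — is exactly the restriction $m/2\le n,l\le 2m$ in (b1), (b2): off the diagonal band the kernel is controlled only by its size, so the standard far-field cancellation is unavailable there. The main obstacle is therefore to tame this non-comparable interaction; my resolution is to isolate it as $T_{\mathrm{glob}}$ and observe that on $\mathbb{N}$ it is a Hardy--Hilbert-type positive operator governed by $M$ and its dual. Two technical points require care: ensuring that the truncation produces a kernel $K_{\mathrm{loc}}$ with \emph{global} regularity (hence the smooth dyadic cutoff), and setting up, in the discrete weighted vector-valued framework, the good-$\lambda$ and sharp maximal function machinery underlying the Fefferman--Stein equivalence on $(\mathbb{N},|\cdot|,\#)$. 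I expect the latter, the discrete weighted harmonic analysis infrastructure that must cooperate with the $A_p(\mathbb{N})$ condition, to be the most laborious ingredient, while the conceptual novelty lies in the local/global splitting itself.
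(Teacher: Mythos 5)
You should first be aware that the paper does not prove this statement at all: Theorem \ref{thm:CZ} is imported verbatim as Theorem 2.1 of \cite{Bet-et-al} and used as a black box, so the only meaningful comparison is with the proof in that reference. Your argument is, as far as I can check, correct, and it belongs to the same circle of ideas as the source: there too the non-comparable region, where only the size estimate is available, is controlled by size-only, Hardy-type positive operator bounds, and the comparable region by genuine Calder\'on--Zygmund theory; the main organizational difference is that in \cite{Bet-et-al} the problem is transferred to the continuous half-line (sequences, kernels and $A_p(\mathbb{N})$ weights become step functions on $(0,\infty)$, for which the weighted vector-valued local Calder\'on--Zygmund theory is already available), whereas you work directly on $(\mathbb{N},|\cdot|,\#)$ with a smooth dyadic cutoff. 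What your route buys is the avoidance of transference and a cleaner isolation of the two mechanisms; what it costs, as you yourself note, is having to set up (or quote for spaces of homogeneous type) the discrete weighted sharp-maximal and good-$\lambda$ machinery.

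Two steps of your sketch must be written out in full, though I believe neither is a genuine gap. First, in verifying that $K_{\mathrm{loc}}(n,m)=K(n,m)\eta(m/n)$ is a \emph{globally} standard kernel, the delicate case is not the increment of $\eta$ when both $(n,m)$ and $(l,m)$ lie in the band (there the Lipschitz bound $|\eta(m/n)-\eta(m/l)|\lesssim |n-l|/n$ together with $|n-m|\lesssim n$ and $|l-m|\asymp|n-m|$ suffices), but the mixed case $\eta(m/l)=0\neq\eta(m/n)$: no difference of values of $K$ is then available, and the size bound alone gives only $1/|n-m|$, which is strictly weaker than the required $|n-l|/|n-m|^{2}$ under $|n-m|>2|n-l|$. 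One checks that this configuration forces $m/n$ into a transition annulus, and then the vanishing of $\eta$ at the endpoints of $[1/2,2]$ rescues the estimate, e.g. $\eta(m/n)\le L\,(2-m/n)\lesssim |n-l|/n$ with $|n-m|\lesssim n$; this is exactly where a sharp cutoff would fail, vindicating your choice, but it is a computation rather than the one-line observation your sketch suggests. Second, for part (ii) the Fefferman--Stein route yields only the strong $(p,p)$ bounds; the conclusion requires the \emph{weighted} weak $(1,1)$ of $T_{\mathrm{loc}}$ for $w\in A_1(\mathbb{N})$, so the Calder\'on--Zygmund decomposition must be run in its $A_1$-weighted form (standard for operator-valued kernels, \`a la Rubio de Francia--Ruiz--Torrea), the unweighted weak $(1,1)$ you mention being insufficient by itself. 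Your treatment of the global part is sound as stated: $Ag\le Mg$ pointwise thanks to the presence of the origin, $A^{*}$ is handled by duality through $w^{1-p'}\in A_{p'}(\mathbb{N})$, and for $w\in A_1(\mathbb{N})$ the monotonicity of $A^{*}g$ closes the weak estimate because the $A_1$ condition applied on $[0,m]$ for each $m\ge N_t$ gives $\frac{1}{m+1}\sum_{k\le N_t}w(k)\le [w]_{A_1(\mathbb{N})}\,w(m)$; likewise your remark that $T_{\mathrm{loc}}f$ is finitely supported for finitely supported $f$ does dispose of the a priori qualifications in the sharp-maximal equivalence.
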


To complete the proof of our result we will need the following fact related to weights in $A_p(\mathbb{N})$.

\begin{lem}
\label{lem:weight}
  Let $1\le p <\infty$ and $w\in A_p(\mathbb{N})$. Then, $w(n)\simeq w(n+1)$.
\end{lem}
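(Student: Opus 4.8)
The plan is to extract the pointwise comparison from the defining $A_p(\mathbb{N})$ inequality by testing it on the \emph{shortest} admissible interval, namely the two-point set $\{n,n+1\}$ obtained by taking $m=n+1$ in the supremum. The guiding idea is that on such a minimal interval the $A_p$ quantity already controls the ratio $w(n)/w(n+1)$ in both directions, because each of the two sums appearing in $[w]_{A_p(\mathbb{N})}$ contains only two terms and can be bounded below by whichever single term produces the desired ratio.

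Concretely, for $1<p<\infty$ I would write $a=w(n)$, $b=w(n+1)$, and observe that taking $m=n+1$ (so $m-n+1=2$) in the definition gives
\[
(a+b)\bigl(a^{-1/(p-1)}+b^{-1/(p-1)}\bigr)^{p-1}\le 2^{p}\,[w]_{A_p(\mathbb{N})}.
\]
Dropping terms on the left, I have $a+b\ge a$ and $\bigl(a^{-1/(p-1)}+b^{-1/(p-1)}\bigr)^{p-1}\ge b^{-1}$, whence the left-hand side is at least $a/b$; this yields $w(n)/w(n+1)\le 2^{p}[w]_{A_p(\mathbb{N})}$. Symmetrically, keeping instead the terms $b$ and $a^{-1}$ gives the reverse ratio $w(n+1)/w(n)\le 2^{p}[w]_{A_p(\mathbb{N})}$. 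The two bounds together furnish $w(n)\simeq w(n+1)$ with a constant depending only on $p$ and $[w]_{A_p(\mathbb{N})}$.

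The case $p=1$ is handled identically, now with the $A_1(\mathbb{N})$ quantity on $\{n,n+1\}$:
\[
\tfrac{1}{2}\bigl(w(n)+w(n+1)\bigr)\,\bigl(\min\{w(n),w(n+1)\}\bigr)^{-1}\le [w]_{A_1(\mathbb{N})},
\]
using $\max\{w(n)^{-1},w(n+1)^{-1}\}=(\min\{w(n),w(n+1)\})^{-1}$. Since the left-hand side equals $\tfrac12\bigl(1+\max/\min\bigr)\ge \tfrac12\,(\max/\min)$, I obtain $\max\{w(n),w(n+1)\}\le 2[w]_{A_1(\mathbb{N})}\min\{w(n),w(n+1)\}$, which is again $w(n)\simeq w(n+1)$.

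There is no genuine obstacle here: the argument is a one-line specialization of the $A_p$ condition to a neighbouring pair, and the only point requiring mild care is keeping track of the exponent $p-1$ (and the passage to the $p=1$ form via $\min/\max$) so that each factor is bounded below by the correct single term to recover the ratio in the intended direction.
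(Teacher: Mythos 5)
Your proof is correct and takes essentially the same approach as the paper: both specialize the $A_p(\mathbb{N})$ (resp.\ $A_1(\mathbb{N})$) condition to the two-point interval $\{n,n+1\}$ and extract the ratio $w(n)/w(n+1)$ in each direction. The only cosmetic difference is that you bound $\bigl(a^{-1/(p-1)}+b^{-1/(p-1)}\bigr)^{p-1}\ge b^{-1}$ by simply dropping a term before exponentiating, whereas the paper first invokes $(x+y)^{r}\ge C_{r}(x^{r}+y^{r})$; your variant is marginally cleaner and yields the constant $2^{p}[w]_{A_p(\mathbb{N})}$ directly.
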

\begin{proof}
For $1<p<\infty$ and $w\in A_p(\mathbb{N})$, it is clear that
\[
[w]_{A_p(\mathbb{N})}\ge \frac{1}{2^p}(w(n)+w(n+1))(w(n)^{-1/(p-1)}+w(n+1)^{-1/(p-1)})^{p-1}, \qquad n\in \mathbb{N}.
\]
Now, by means of the inequality $(a+b)^r\ge C_r (a^r+b^r)$, where $a,b,r>0$ and $C_r=\min\{2^{r-1},1\}$, we have
\begin{equation*}
[w]_{A_p(\mathbb{N})}\ge \frac{C_{p-1}}{2^p}(w(n)+w(n+1))(w(n)^{-1}+w(n+1)^{-1})> \frac{C_{p-1}}{2^p}w(n)w(n+1)^{-1}
\end{equation*}
and, similarly,
\[
[w]_{A_p(\mathbb{N})}> \frac{C_{p-1}}{2^p}w(n+1)w(n)^{-1}.
\]
So,
\[
\frac{C_{p-1}}{2^p [w]_{A_p(\mathbb{N})}}w(n)< w(n+1)< \frac{2^p [w]_{A_p(\mathbb{N})}}{C_{p-1}}w(n).
\]

For $p=1$, if we suppose first $w(n)\le w(n+1)$, then it is clear that
\begin{align*}
[w]_{A_1(\mathbb{N})}&\ge \frac{1}{2}(w(n)+w(n+1))\max\{(w(n)^{-1},w(n+1)^{-1}\}\\&=\frac{1}{2}\left(1+w(n+1)w(n)^{-1}\right)> \frac{w(n+1)w(n)^{-1}}{2}
\end{align*}
and we obtain $w(n)\le w(n+1)<2[w]_{A_1(\mathbb{N})} w(n)$. By other hand, supposing $w(n+1)< w(n)$ the procedure is exactly the same.
\end{proof}

We give now a proof of Theorem~\ref{thm:trans}. First, supposing $f\in \mathbb{C}_{0}^{\mathbb{N}}$, we can express the transplantation operator as
\begin{equation}
\label{eq:trans}
T_{\alpha,\beta}^{\gamma,\delta}f(n)=\sum_{m=0}^{\infty}f(m){K}_{\alpha,\beta}^{\gamma,\delta}(n,m),
\end{equation}
with
\[
K_{\alpha,\beta}^{\gamma,\delta}(n,m)=\int_{0}^{\pi}p_n^{(\gamma,\delta)}(x)p_m^{(\alpha,\beta)}(x)\, dx.
\]

Now, we note that it is possible to split the $m$ variable in its even and odd parts, so we have
\begin{equation*}
T_{\alpha,\beta}^{\gamma,\delta} f(n) = \sum_{m=0}^{\infty} f(2m) K_{\alpha,\beta}^{\gamma,\delta}(n,2m) + \sum_{m=0}^{\infty} f(2m+1) K_{\alpha,\beta}^{\gamma,\delta}(n,2m+1),
\end{equation*}
which motivates the following defintions
\begin{align*}
{}^{\text{e,e}}T_{\alpha,\beta}^{\gamma,\delta}f(n) &= \sum_{m=0}^{\infty} f(m) {}^{\text{e,e}}K_{\alpha,\beta}^{\gamma,\delta}(n,m), & {}^{\text{e,e}}K_{\alpha,\beta}^{\gamma,\delta}(n,m) &= K_{\alpha,\beta}^{\gamma,\delta}(2n,2m),\\
{}^{\text{e,o}}T_{\alpha,\beta}^{\gamma,\delta}f(n) &= \sum_{m=0}^{\infty} f(m) {}^{\text{e,o}}K_{\alpha,\beta}^{\gamma,\delta}(n,m), & {}^{\text{e,o}}K_{\alpha,\beta}^{\gamma,\delta}(n,m) &= K_{\alpha,\beta}^{\gamma,\delta}(2n,2m+1),\\
{}^{\text{o,e}}T_{\alpha,\beta}^{\gamma,\delta}f(n) &= \sum_{m=0}^{\infty} f(m) {}^{\text{o,e}}K_{\alpha,\beta}^{\gamma,\delta}(n,m), & {}^{\text{o,e}}K_{\alpha,\beta}^{\gamma,\delta}(n,m) &= K_{\alpha,\beta}^{\gamma,\delta}(2n+1,2m),
\intertext{and}
{}^{\text{o,o}}T_{\alpha,\beta}^{\gamma,\delta}f(n) &= \sum_{m=0}^{\infty} f(m) {}^{\text{o,o}}K_{\alpha,\beta}^{\gamma,\delta}(n,m), & {}^{\text{o,o}}K_{\alpha,\beta}^{\gamma,\delta}(n,m) &= K_{\alpha,\beta}^{\gamma,\delta}(2n+1,2m+1).
\end{align*}
In this way, we obtain that
\begin{equation*}
T_{\alpha,\beta}^{\gamma,\delta} f(2n) = {}^{\text{e,e}}T_{\alpha,\beta}^{\gamma,\delta} \tilde{f}(n) + {}^{\text{e,o}}T_{\alpha,\beta}^{\gamma,\delta} \hat{f}(n),
\end{equation*}
and
\begin{equation*}
T_{\alpha,\beta}^{\gamma,\delta} f(2n+1) = {}^{\text{o,e}}T_{\alpha,\beta}^{\gamma,\delta} \tilde{f}(n) + {}^{\text{o,o}}T_{\alpha,\beta}^{\gamma,\delta} \hat{f}(n),
\end{equation*}
with $\tilde{f}(n) = f(2n)$ and $\hat{f}(n) = f(2n+1)$, $n\in\mathbb{N}$. In addition, note that $^{\text{e,e}}T_{\alpha,\beta}^{\gamma,\delta}$, $^{\text{e,o}}T_{\alpha,\beta}^{\gamma,\delta}$, $^{\text{o,e}}T_{\alpha,\beta}^{\gamma,\delta}$, and $^{\text{o,o}}T_{\alpha,\beta}^{\gamma,\delta}$ are bounded operators in $\ell^{2}(\mathbb{N})$ because so is $T_{\alpha,\beta}^{\gamma,\delta}$. Indeed, let us define the functions
\begin{equation*}
g(n) = f(n/2)\chi_{\mathcal{E}}(n)
\qquad \text{ and } \qquad
h(n) = f((n-1)/2)\chi_{\mathcal{O}}(n),
\end{equation*}
where $\mathcal{E}$ and $\mathcal{O}$ denotes the sets of even and odd numbers respectively. Then, it is verified that  ${}^{\text{e,e}}T_{\alpha,\beta}^{\gamma,\delta} f(n) = T_{\alpha,\beta}^{\gamma,\delta} g(2n)$, ${}^{\text{e,o}}T_{\alpha,\beta}^{\gamma,\delta} f(n) = T_{\alpha,\beta}^{\gamma,\delta} h(2n)$, ${}^{\text{o,e}}T_{\alpha,\beta}^{\gamma,\delta} f(n) = T_{\alpha,\beta}^{\gamma,\delta} g(2n+1)$, and ${}^{\text{o,o}}T_{\alpha,\beta}^{\gamma,\delta} f(n) = T_{\alpha,\beta}^{\gamma,\delta} h(2n+1)$, so the boundedness in $\ell^2(\mathbb{N})$ of each operator follows immediately.

Therefore, it is enough to prove that the kernels ${}^{\text{e,e}}K_{\alpha,\beta}^{\gamma,\delta}$, ${}^{\text{e,o}}K_{\alpha,\beta}^{\gamma,\delta}$, ${}^{\text{o,e}}K_{\alpha,\beta}^{\gamma,\delta}$, and ${}^{\text{o,o}}K_{\alpha,\beta}^{\gamma,\delta}$ satisfy the properties (a) and (b). These facts are consequence of the following propositions (see~\cite{ACL-JacI} for similar estimates in other setting).
\begin{propo}
\label{propo:size}
  Let $n,m\in\mathbb{N}$, $n\neq m$, $\alpha,\beta,\gamma,\delta \ge -1/2$, and $\alpha\neq\gamma$, $\beta\neq\delta$. Then,
\begin{equation}
\label{eq:size-cond}
|K_{\alpha,\beta}^{\gamma,\delta}(n,m)|\le \frac{C}{|n-m|}.
\end{equation}
\end{propo}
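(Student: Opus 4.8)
The plan is to estimate the oscillatory integral $K_{\alpha,\beta}^{\gamma,\delta}(n,m)=\int_0^\pi p_n^{(\gamma,\delta)}(x)p_m^{(\alpha,\beta)}(x)\,dx$ directly from the asymptotics of the functions $p_k^{(\cdot,\cdot)}$. Since $K_{\alpha,\beta}^{\gamma,\delta}(n,m)=K_{\gamma,\delta}^{\alpha,\beta}(m,n)$ and $|n-m|=|m-n|$, I may relabel the two parameter pairs and assume $n\ge m$. The uniform estimate $|p_k^{(\alpha,\beta)}(x)|\le C$ (valid for $\alpha,\beta\ge-1/2$) gives at once $|K_{\alpha,\beta}^{\gamma,\delta}(n,m)|\le C\pi$, so \eqref{eq:size-cond} is trivial whenever $|n-m|$ stays bounded; hence I only need to treat the range $|n-m|\ge M_0$ for a large constant $M_0=M_0(\alpha,\beta,\gamma,\delta)$, where in particular $n\ge n-m\ge M_0$ is large.

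Next I would record the two regimes of behaviour of $p_k^{(\alpha,\beta)}$: near the endpoints the power bound $|p_k^{(\alpha,\beta)}(x)|\le C(kx)^{\alpha+1/2}$ on $0<x\le 1/k$ (with the analogue in $\beta$ and $\pi-x$ near $\pi$), together with the derivative control $|\tfrac{d}{dx}p_k^{(\alpha,\beta)}(x)|\le Ck(kx)^{\alpha-1/2}$; and in the bulk the Hilb-type expansion
\[
p_k^{(\alpha,\beta)}(x)=\sqrt{\tfrac2\pi}\cos\big(\Lambda_k^{(\alpha,\beta)}x-(2\alpha+1)\tfrac\pi4\big)+\mathcal E_k^{(\alpha,\beta)}(x),\qquad \Lambda_k^{(\alpha,\beta)}=k+\tfrac{\alpha+\beta+1}{2},
\]
with $|\mathcal E_k^{(\alpha,\beta)}(x)|\le \tfrac{C}{k\sin x}$ on $[1/k,\pi-1/k]$. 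I would then split $[0,\pi]$ according to the two scales $1/n$ and $1/m$ into the endpoint zones $[0,1/n]\cup[\pi-1/n,\pi]$, the transition zones $[1/n,1/m]\cup[\pi-1/m,\pi-1/n]$, and the common bulk $[1/m,\pi-1/m]$. On the endpoint zones both factors obey their power bounds and a direct computation yields a contribution $\le C(m/n)^{\alpha+1/2}n^{-1}\le Cn^{-1}\le C|n-m|^{-1}$. On the transition zones $p_n$ oscillates rapidly while $p_m$ is a slowly varying power-type amplitude, so one integration by parts against $\cos(\Lambda_n^{(\gamma,\delta)}x-\cdots)$ produces boundary terms of size $\le Cn^{-1}$ and an integral term bounded by $\tfrac{C}{n}\int|\tfrac{d}{dx}p_m^{(\alpha,\beta)}|\le Cn^{-1}$; the key point is that the factor $(\alpha+\tfrac12)$ in the derivative of the amplitude tames the a priori singular term, which in fact vanishes in the borderline case $\alpha=-1/2$, where $p_m$ is merely oscillatory with $|\tfrac{d}{dx}p_m|\le Cm$. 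Thus these zones also contribute $\le C|n-m|^{-1}$.

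The heart of the matter is the common bulk. Replacing each factor by its main term and using the product-to-sum identity, the leading contribution is $\tfrac1\pi\int[\cos((\Lambda_n^{(\gamma,\delta)}-\Lambda_m^{(\alpha,\beta)})x+c_-)+\cos((\Lambda_n^{(\gamma,\delta)}+\Lambda_m^{(\alpha,\beta)})x+c_+)]\,dx$; since $\Lambda_n^{(\gamma,\delta)}+\Lambda_m^{(\alpha,\beta)}\gtrsim n+m\ge|n-m|$ and, for $|n-m|\ge M_0$, $|\Lambda_n^{(\gamma,\delta)}-\Lambda_m^{(\alpha,\beta)}|=|(n-m)+\tfrac{\gamma+\delta-\alpha-\beta}{2}|\ge\tfrac12|n-m|$, integrating gives exactly the desired $O(|n-m|^{-1})$. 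The main obstacle I expect is controlling the cross terms $\cos(\Lambda_n^{(\gamma,\delta)}x-\cdots)\,\mathcal E_m$, $\mathcal E_n\,\cos(\Lambda_m^{(\alpha,\beta)}x-\cdots)$ and $\mathcal E_n\mathcal E_m$ on this region: naively bounding $|\mathcal E_k|\le C/(k\sin x)$ and integrating $\int_{1/m}^{\pi-1/m}\tfrac{dx}{\sin x}$ costs a logarithm, which is fatal when $n\simeq m$ with $|n-m|$ comparable to $m$. To avoid it I would not take absolute values but keep the known oscillatory structure of $\mathcal E_k$ and integrate by parts once more, so that each cross term again splits into difference and sum frequencies $\gtrsim|n-m|$ and decays like $|n-m|^{-1}$. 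This refined treatment of the error terms, uniform in the relative sizes of $n$ and $m$, is the delicate step of the argument.
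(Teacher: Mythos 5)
Your overall architecture (reduce to $n\ge m$ and $|n-m|$ large, peel off the endpoint zones with the pointwise bound \eqref{eq:unif-bound}, then exploit oscillation in the bulk) is reasonable, and it is a genuinely different route from the paper's: the paper never touches Hilb-type asymptotics. Instead it computes $\lambda_n^{(\gamma,\delta)}K_2=\int_{I_2}L^{\gamma,\delta}p_n^{(\gamma,\delta)}\,p_m^{(\alpha,\beta)}$, integrates by parts via \eqref{eq:parts-L}, and uses \eqref{eq:rel-L} to trade $L^{\gamma,\delta}$ acting on $p_m^{(\alpha,\beta)}$ for the eigenvalue $\lambda_m^{(\alpha,\beta)}$ plus the potential $W_{\gamma,\delta}^{\alpha,\beta}$; this yields the exact identity \eqref{eq:k-eigen}, $K_2=(S+J)/(\lambda_n^{(\gamma,\delta)}-\lambda_m^{(\alpha,\beta)})$, so the full gain $|\lambda_n^{(\gamma,\delta)}-\lambda_m^{(\alpha,\beta)}|\gtrsim|n-m|(n+m)$ comes for free from the eigenvalue gap, and the numerator only needs the crude bound $C(n+m)$ obtained from \eqref{eq:unif-bound} and \eqref{eq:der-pn}. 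No cancellation beyond the identity itself is required.

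The gap in your plan sits exactly where you flag it as ``delicate'': the cross terms $\cos(\Lambda_n^{(\gamma,\delta)}x+c)\,\mathcal E_m(x)$ and $\mathcal E_n(x)\cos(\Lambda_m^{(\alpha,\beta)}x+c)$ in the regime $n\simeq 2m$ with $m$ large, where the target is $C/m$ but the absolute-value estimate gives $C(\log m)/m$. Your proposed fix --- keep the ``known oscillatory structure'' of $\mathcal E_k$ and integrate by parts once more --- presupposes a second-order uniform asymptotic expansion of $p_k^{(a,b)}$ on $[1/k,\pi-1/k]$ with (i) an explicit oscillatory correction of size $(k\sin x)^{-1}$ whose phase can be combined with that of the other factor, and (ii) a residual error small enough (roughly $o\bigl(k^{-1}(\log k)^{-1}(\sin x)^{-1}\bigr)$ uniformly down to $x\sim 1/k$) that taking absolute values of it is then harmless. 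Such expansions exist in the literature (Darboux/Baratella--Gatteschi type), but you do not quote one, the residual at the required uniformity is not available off the shelf, and one cannot simply differentiate the $O((k\sin x)^{-1})$ bound on $\mathcal E_k$; the crude alternative $|\mathcal E_m'|\le Cm$ yields only $O(m/n)$ after one integration by parts, which is useless here. So as written the bulk estimate is not closed; this is a genuine missing ingredient rather than a routine verification, and it is precisely the difficulty that the paper's Sturm--Liouville identity is designed to bypass.
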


\begin{propo}
\label{propo:reg}
  Let $n,m\in\mathbb{N}$, $n\neq m$, $m/2\leq n\leq 2m$, $\alpha,\beta,\gamma,\delta \ge -1/2$, and $\alpha\neq\gamma$, $\beta\neq\delta$. Then,
\begin{equation}
\label{eq:reg-cond-1}
|K_{\alpha,\beta}^{\gamma,\delta}(n+2,m)-K_{\alpha,\beta}^{\gamma,\delta}(n,m)|\le \frac{C}{|n-m|^{2}},
\end{equation}
and
\begin{equation}
\label{eq:reg-cond-2}
|K_{\alpha,\beta}^{\gamma,\delta}(n,m)-K_{\alpha,\beta}^{\gamma,\delta}(n,m+2)|\le \frac{C}{|n-m|^{2}}.
\end{equation}
\end{propo}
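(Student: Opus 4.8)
The plan is to extract both estimates from the classical asymptotics of the functions $p_n^{(\alpha,\beta)}$, using in an essential way the hypothesis $m/2\le n\le 2m$, which forces $n\simeq m$ and lets us treat the two indices as comparably large. First I would split $(0,\pi)$ into two caps around the endpoints, of length $\simeq 1/n$, and the complementary bulk. On the bulk I would use the trigonometric (Darboux) asymptotics
\[
p_n^{(\alpha,\beta)}(x)=\sqrt{\tfrac{2}{\pi}}\cos\big(N_{\alpha,\beta}\,x-\phi_{\alpha,\beta}\big)+E_n^{(\alpha,\beta)}(x),\qquad N_{\alpha,\beta}=n+\tfrac{\alpha+\beta+1}{2},\ \ \phi_{\alpha,\beta}=\tfrac{(2\alpha+1)\pi}{4},
\]
with the usual control on $E_n^{(\alpha,\beta)}$ and its derivative (which carries an endpoint weight of type $(\sin x)^{-1}$), while on the caps I would use the Bessel-type (Mehler–Heine/Hilb) expansion $p_n^{(\alpha,\beta)}(x)\approx c\,(N_{\alpha,\beta}x)^{1/2}J_\alpha(N_{\alpha,\beta}x)$ near $0$, and the analogue near $\pi$ obtained through $x\mapsto\pi-x$ (which interchanges $\alpha$ and $\beta$).

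The heart of the matter, and the reason one steps from $n$ to $n+2$ and not to $n+1$, lies in the bulk leading term. Inserting the two leading cosines into $K_{\alpha,\beta}^{\gamma,\delta}$ and applying the product-to-sum formula reduces the main contribution to elementary integrals $\int\cos(cx-d)\,dx$, which I would evaluate \emph{exactly}. The resulting boundary values are of the form $(-1)^{n-m}$ (from the difference frequency $c\simeq n-m$) and $(-1)^{n+m}$ (from the sum frequency $c\simeq n+m$) times constants depending only on the parameters, the oscillatory integrals themselves being $\big((n-m)+\mu\big)^{-1}$ and $\big((n+m)+\nu\big)^{-1}$, with $\mu=\tfrac{\gamma+\delta-\alpha-\beta}{2}$ and $\nu=\tfrac{\gamma+\delta+\alpha+\beta+2}{2}$ fixed. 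Replacing $n$ by $n+2$ leaves both signs $(-1)^{n-m},(-1)^{n+m}$ unchanged, so the numerators cancel in the difference and only the prefactors survive; since
\[
\frac{1}{(n-m)+\mu+2}-\frac{1}{(n-m)+\mu}=\frac{-2}{\big((n-m)+\mu\big)\big((n-m)+\mu+2\big)}=O\!\left(\frac{1}{|n-m|^{2}}\right),
\]
and likewise for the $n+m$ term (which is even smaller, as $n+m\ge|n-m|$ on $\mathbb{N}$), the main term already obeys \eqref{eq:reg-cond-1}. It is worth stressing that a crude two-term bound here would only recover the size estimate \eqref{eq:size-cond}; the extra power of $|n-m|$ comes precisely from this parity cancellation.

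For the remaining pieces no cancellation of this strength is needed, because $n\simeq m$ gives $|n-m|\lesssim n$, so any bound of the form $O(1/n^{2})$ or $O(1/(n|n-m|))$ is automatically $O(1/|n-m|^{2})$. On the caps I would use that passing from the index $N_{\gamma,\delta}$ to $N_{\gamma,\delta}+2$ inside the Bessel profile yields, upon differentiating in the index, an extra factor $N_{\gamma,\delta}^{-1}\simeq n^{-1}$; combined with the $(N_{\gamma,\delta}x)^{\gamma+1/2}$, $(N_{\alpha,\beta}x)^{\alpha+1/2}$ behaviour of the factors (integrable near $0$ since $\alpha,\gamma\ge-1/2$) and the length $\simeq 1/n$ of the caps, this produces a cap contribution of order $n^{-2}$ to the difference. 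The endpoint $x\simeq\pi$ is treated identically after the reflection $x\mapsto\pi-x$. The bulk error contributions (error times leading term, and error times error) I would estimate by exploiting the oscillation of the factors together with the derivative bounds on $E_n^{(\alpha,\beta)}$; estimate \eqref{eq:reg-cond-2} then follows by the same scheme with the roles of $(n,\gamma,\delta)$ and $(m,\alpha,\beta)$ interchanged.

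The step I expect to be the main obstacle is the bookkeeping of the error terms. One must employ an expansion precise enough, and with sharp enough derivative control, that the endpoint-singular weights entering $E_n^{(\alpha,\beta)}$ remain integrable against the oscillatory factors and do not degrade the rate $|n-m|^{-2}$; in practice this seems to require a multi-term expansion in which each successive amplitude decays by an extra power of $N_{\alpha,\beta}^{-1}$ while becoming more singular at the endpoints, and in which the parity cancellation of the previous paragraph must be reproduced at the leading error order (the residual coming from the change of the $N^{-k}$ prefactors being of lower order and absorbable in the regime $n\simeq m$). Keeping the exact evaluation of the leading oscillatory integral intact while controlling these weights, and matching the cap and bulk expansions across the junction $x\simeq 1/n$, are the delicate but essentially routine points, following the pattern of the estimates of the type cited in \cite{ACL-JacI}.
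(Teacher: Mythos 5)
Your leading-term computation is sound, and the parity mechanism you identify (the boundary values carry $(-1)^{n-m}$ and $(-1)^{n+m}$, both unchanged under $n\mapsto n+2$, so only the difference of the prefactors $((n-m)+\mu)^{-1}$ survives, which is $O(|n-m|^{-2})$) is indeed the reason the regularity condition is stated with a step of $2$. The genuine gap is exactly where you place it, and it is not a routine bookkeeping issue: the one-term Darboux/Hilb expansion on the bulk carries an error $E_n^{(\alpha,\beta)}(x)=O\bigl((n\sin x)^{-1}\bigr)$, so the crude estimate of (main term)$\times$(error) over $[1/(n+1),\pi/2]$ is of order $n^{-1}\log n$. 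Under the hypothesis $m/2\le n\le 2m$ the quantity $|n-m|$ can be as large as $\simeq n/2$, in which case the target bound $|n-m|^{-2}\simeq n^{-2}$ is far smaller than $n^{-1}\log n$; the crude bound already fails once $|n-m|\gtrsim (n/\log n)^{1/2}$. Rescuing this requires either integrating by parts against the oscillation of $E_n$ (for which the classical formulas give no usable derivative control) or a multi-term expansion in which the parity cancellation is verified order by order for the \emph{differences} $E_{n+2}-E_n$ as well. That is precisely the long technical core of the Askey--Wainger route, and deferring it as essentially routine leaves the hardest part of the proof undone.

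The paper takes a genuinely different path that avoids asymptotic expansions altogether. It splits the integral over the two caps $(0,1/(n+1))$, $(\pi-1/(n+1),\pi)$ and the bulk; the caps are handled by \eqref{eq:unif-bound} together with Lemma \ref{lem:bound-diff}, which bounds $|p_{n+2}^{(a,b)}-p_n^{(a,b)}|$ with a gain of a factor $(n+1)^{-1}$ (or $\sin\frac{x}{2}\cos\frac{x}{2}$) over the individual bounds and is proved from a contiguous relation for Jacobi polynomials rather than from asymptotics. On the bulk, the two powers of $|n-m|$ come from the Sturm--Liouville structure: one application of \eqref{eq:parts-L} yields the representation \eqref{eq:k-eigen} with denominator $\lambda_n^{(\gamma,\delta)}-\lambda_m^{(\alpha,\beta)}\sim(n+m)(n-m)$, and a second integration by parts applied to the potential term $\mathcal{J}(n,m)$ produces the square of that denominator, the numerators being controlled by \eqref{eq:unif-bound}, \eqref{eq:bound-diff}, and \eqref{eq:bound-diff-der}. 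If you wish to complete your argument along classical lines, expect to have to establish, at the level of the asymptotic amplitudes, difference estimates playing the role of Lemmas \ref{lem:bound-diff} and \ref{lem:bound-diff-der}.
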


In this way, by Theorem \ref{thm:CZ} and taking the weights $w_o(n)=w(2n+1)$ and $w_e(n)=w(2n)$ (note that both of them belongs to $A_p(\mathbb{N})$ because $w\in A_p(\mathbb{N})$), for $1<p<\infty$ we have
\[
\|{}^{\text{e,e}}T_{\alpha,\beta}^{\gamma,\delta} \tilde{f}\|_{\ell^p(\mathbb{N},w_e)}\le \|\tilde{f}\|_{\ell^p(\mathbb{N},w_e)},
\]
\[
\|{}^{\text{e,o}}T_{\alpha,\beta}^{\gamma,\delta} \hat{f}\|_{\ell^p(\mathbb{N},w_e)}\le \|\hat{f}\|_{\ell^p(\mathbb{N},w_e)},
\]
\[
\|{}^{\text{o,e}}T_{\alpha,\beta}^{\gamma,\delta} \tilde{f}\|_{\ell^p(\mathbb{N},w_o)}\le \|\tilde{f}\|_{\ell^p(\mathbb{N},w_o)},
\]
\[
\|{}^{\text{o,o}}T_{\alpha,\beta}^{\gamma,\delta} \hat{f}\|_{\ell^p(\mathbb{N},w_o)}\le \|\hat{f}\|_{\ell^p(\mathbb{N},w_o)},
\]
and the corresponding weak inequalities for $p=1$. To complete the proof, it is enough to observe that, by Lemma \ref{lem:weight},
\[
\|\hat{f}\|_{\ell^p(\mathbb{N},w_e)}\le C \|\hat{f}\|_{\ell^p(\mathbb{N},w_o)}\le C\|f\|_{\ell^p(\mathbb{N},w)}
\]
and
\[
\|\tilde{f}\|_{\ell^p(\mathbb{N},w_o)}\le C \|\hat{f}\|_{\ell^p(\mathbb{N},w_e)}\le C\|f\|_{\ell^p(\mathbb{N},w)}.
\]
\section{Proof of Proposition \ref{propo:size}}
The functions $p_n^{(a,b)}$, $n\in\mathbb{N}$ and $a,b>-1$, are eigenfunctions of the second order differential operator (see \cite[p.~67, eq.~(4.24.2)]{Szego})
\[
L^{a,b}=-\frac{d^2}{dx^2}-\left(\frac{1-4a^2}{16\sin^2(x/2)}+\frac{1-4b^2}{16\cos^2(x/2)}\right),\qquad x\in(0,\pi),
\]
that is,
\begin{equation}
\label{eq:eigen-eq}
L^{a,b}p_n^{(a,b)}=\lambda_n^{(a,b)}p_n^{(a,b)}
\end{equation}
with the eigenvalues given by $\lambda_n^{(a,b)}=(n+(a+b+1)/2)^2$. The operator $L^{a,b}$ is self-adjoint in $L^2(0,\pi)$ and furthermore, for any interval $[r,s]\subset (0,\pi)$, the identity
\begin{equation}
\label{eq:parts-L}
\int_{r}^{s}L^{a,b}f(x)g(x)\, dx=U(f,g)(x)\Big|_{x=r}^{x=s}+\int_{r}^{s}f(x)L^{a,b}g(x)\, dx,
\end{equation}
holds with
\[
U(f,g)(x)=f(x)\frac{dg}{dx}(x)-g(x)\frac{df}{dx}(x).
\]
Note that we can connect $L^{a,b}$ and $L^{c,d}$ by mean of
\begin{equation}
\label{eq:rel-L}
L^{a,b}=L^{c,d}+W_{a,b}^{c,d},
\end{equation}
where
\[
W_{a,b}^{c,d}(x)=\frac{a^2-c^2}{4\sin^2(x/2)}+\frac{b^2-d^2}{4\cos^2(x/2)}.
\]

One of the main tools to estimate the kernel $K_{\alpha,\beta}^{\gamma,\delta}(n,m)$ is the bound (see \cite[eq.~(2.8)]{Muckenhoupt})
\begin{equation}
\label{eq:unif-bound}
|p_n^{(a,b)}(x)|\le C \begin{cases}
(n+1)^{a+1/2}(\sin x/2)^{a+1/2}, & 0<x<1/(n+1),\\
1, & 1/(n+1)\le x\le \pi-1/(n+1),\\
(n+1)^{b+1/2}(\cos x/2)^{b+1/2}, & \pi-1/(n+1)<x<\pi,
\end{cases}
\end{equation}
which holds for $n\in\mathbb{N}$ and $a,b>-1$.

\begin{proof}[Proof of Proposition \ref{propo:size}]
First, note that we may suppose that $n>m$ due to the symmetry.

We decompose the kernel $K_{\alpha,\beta}^{\gamma,\delta}(n,m)$ according to the intervals $I_1=(0,1/(n+1))$, $I_2=[1/(n+1), \pi-1/(n+1)]$, $I_3=(\pi-1/(n+1), \pi)$ and we denote the corresponding integrals by $K_1(n,m)$, $K_2(n,m)$, and $K_3(n,m)$, respectively. For $K_1$ and $K_3$, by \eqref{eq:unif-bound}, we have that
\[
|K_1(n,m)|\le C (n+1)^{\gamma+1/2}(m+1)^{\alpha+1/2}\int_{I_1} (\sin x/2)^{\gamma+\alpha+1}\, dx\le \frac{C}{n},
\]
and
\[
|K_3(n,m)|\le C (n+1)^{\delta+1/2}(m+1)^{\beta+1/2}\int_{I_3} (\cos x/2)^{\delta+\beta+1}\, dx\le \frac{C}{n}.
\]

In order to estimate $K_{2}(n,m)$ we consider some cases. If $\lambda_n^{(\gamma,\delta)}=\lambda_m^{(\alpha,\beta)}$, we get $|n-m|\sim C$ and therefore we can use that
\begin{equation}
\label{eq_K2orto}
|K_2(n,m)|\leq \|p_n^{(\gamma,\delta)}\|_{L^2(0,\pi)}\|p_m^{(\alpha,\beta)}\|_{L^2(0,\pi)}=1
\end{equation}
to obtain the result.

If $\lambda_n^{(\gamma,\delta)}\not=\lambda_m^{(\alpha,\beta)}$, $\alpha+\beta\not=\gamma+\delta$, and $n-m\leq |\alpha+\beta-\gamma-\delta|$, \eqref{eq_K2orto} also leads to the result.

Otherwise, we use \eqref{eq:eigen-eq}, \eqref{eq:parts-L} (specified for $f(x)=p_n^{(\gamma,\delta)}(x)$, $g(x)=p_m^{(\alpha,\beta)}(x)$, $a=\gamma$, $b=\delta$, $r=1/(n+1)$, and $s=\pi-1/(n+1)$), and \eqref{eq:rel-L} (with $a=\gamma$, $b=\delta$, $c=\alpha$, and $d=\beta$), and denote
\[
J(n,m)=\int_{I_2}W_{\gamma,\delta}^{\alpha,\beta}(x)p_n^{(\gamma,\delta)}(x)p_m^{(\alpha,\beta)}(x)\, dx,
\]
and
\[
S(n,m)=U(p_n^{(\gamma,\delta)},p_m^{(\alpha,\beta)})(x)\Big|_{x=1/(n+1)}^{x=\pi-1/(n+1)}.
\]
We obtain that
\begin{align*}
\lambda_n^{(\gamma,\delta)}K_2(n,m)&=\int_{I_2}L^{\gamma,\delta}p_n^{(\gamma,\delta)}(x)p_m^{(\alpha,\beta)}(x)\, dx
\\&=S(n,m)+\int_{I_2}p_n^{(\gamma,\delta)}(x)L^{\gamma,\delta}p_m^{(\alpha,\beta)}(x)\, dx\\&=S(n,m)+\lambda_m^{(\alpha,\beta)}K_{2}(n,m)+J(n,m)
\end{align*}
and hence
\begin{equation}
\label{eq:k-eigen}
K_2(n,m)=\frac{S(n,m)+J(n,m)}{\lambda_n^{(\gamma,\delta)}-\lambda_{m}^{(\alpha,\beta)}}.
\end{equation}
Taking the operator
\[
\Psi=\frac{d}{dx}-\frac{2a+1}{4}\cot\frac{x}{2}+\frac{2b+1}{4}\tan\frac{x}{2},
\]
it is known that (see \cite[7.7]{Now-Stem})
\begin{equation}
\label{eq:delta-pn}
\Psi p_n^{(a,b)}=-\sqrt{n(n+a+b+1)}p_{n-1}^{(a+1,b+1)},
\end{equation}
for $n\neq0$, and $\Psi p_{0}^{(a,b)}=0$, $a,b>-1$. Then, if $n\neq0$,
\begin{multline}
\label{eq:der-pn}
\frac{d p_n^{(a,b)}}{dx}(x)=-\sqrt{n(n+a+b+1)}p_{n-1}^{(a+1,b+1)}(x)\\+\left(\frac{2a+1}{4}\cot\frac{x}{2}-\frac{2b+1}{4}\tan\frac{x}{2}\right)p_{n}^{(a,b)}(x)
\end{multline}
and
\begin{equation}
\label{eq:der-pn0}
\frac{d p_0^{(a,b)}}{dx}(x)=\left(\frac{2a+1}{4}\cot\frac{x}{2}-\frac{2b+1}{4}\tan\frac{x}{2}\right)p_{0}^{(a,b)}(x).
\end{equation}

The use of these identities and the bound \eqref{eq:unif-bound} show that
\begin{equation}
\label{eq:bound-S}
|S(n,m)|\le C (n+m).
\end{equation}

Finally, we prove the estimate
\begin{equation}
\label{eq:bound-J}
|J(n,m)|\le C (n+m).
\end{equation}
We decompose $J(n,m)$ into four integrals, denoted by  $J_1(n,m)$, $J_2(n,m)$, $J_3(n,m)$, $J_4(n,m)$, over the intervals $U_1=[1/(n+1),1/(m+1))$, $U_2=[1/(m+1),\pi/2]$, $U_3=(\pi/2,\pi-1/(m+1)]$, and $U_4=(\pi-1/(m+1),\pi-1/(n+1)]$. We will only work with $J_1$ and $J_2$ because the other ones can be treated in a similar way. By \eqref{eq:unif-bound} and since $\alpha\ge -1/2$, we get
\[
|J_1(n,m)|\le C (m+1)^{\alpha+1/2}\int_{U_1} (\sin x/2)^{\alpha-3/2}\, dx\le C\int_{1/(n+1)}^{\pi/2}\frac{dx}{x^2}\le C n
\]
and
\[
|J_2(n,m)|\le C\int_{U_{2}}\frac{dx}{x^2}\le C (m+1).
\]
If $m=0$, $|J_2(n,m)|\le C$ and if $m>0$, $|J_2(n,m)|\le Cm$; therefore the bound \eqref{eq:bound-J} holds.

So, we conclude that
\[
|K_2(n,m)|\le \frac{C}{|n-m|}
\]
from \eqref{eq:k-eigen}, \eqref{eq:bound-S}, and \eqref{eq:bound-J}, and the proof is completed.
\end{proof}

\section{Proof of the Proposition \ref{propo:reg}}
In the proof of Proposition \ref{propo:reg} we will use the two following technical lemmas. Their proofs are postponed to next section.

\begin{lem}
\label{lem:bound-diff}
  Let $n\in\mathbb{N}$ and $a,b> -1$, then
\begin{multline}
\label{eq:bound-diff}
|p_{n+2}^{(a,b)}(x)-p_{n}^{(a,b)}(x)|\\\le C
\begin{cases}
(n+1)^{a-1/2}(\sin x/2)^{a+1/2}, & 0<x<1/(n+1),\\
\sin x/2\cos x/2, & 1/(n+1)\le x \le \pi-1/(n+1),\\
(n+1)^{b-1/2}(\cos x/2)^{b+1/2}, & \pi-1/(n+1)<x<\pi.
\end{cases}
\end{multline}
\end{lem}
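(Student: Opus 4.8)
The plan is to reduce the three cases of \eqref{eq:bound-diff} to two by a reflection symmetry, and then to isolate the cancellation between the indices $n$ and $n+2$ through a Bessel-type (Hilb) asymptotic, so that the extra factor $(n+1)^{-1}$ appears as a genuine gain rather than being thrown away. First I would discard the third case. Since $w_n^{(a,b)}=w_n^{(b,a)}$ and $P_n^{(a,b)}(-z)=(-1)^nP_n^{(b,a)}(z)$, the change of variable $x\mapsto\pi-x$ (which sends $\sin\frac x2$ to $\cos\frac x2$) gives $p_n^{(a,b)}(\pi-x)=(-1)^n p_n^{(b,a)}(x)$, whence
\[
p_{n+2}^{(a,b)}(\pi-x)-p_{n}^{(a,b)}(\pi-x)=(-1)^n\big(p_{n+2}^{(b,a)}(x)-p_{n}^{(b,a)}(x)\big).
\]
Because $a,b\ge-1/2$ is preserved under $(a,b)\mapsto(b,a)$, the bound on $(\pi-1/(n+1),\pi)$ for $(a,b)$ follows from the bound on $(0,1/(n+1))$ for $(b,a)$, and the middle envelope $\sin\frac x2\cos\frac x2$ is reflection invariant. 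Hence it suffices to establish the first two cases on $0<x\le\pi/2$.

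On this range I would invoke a Hilb-type asymptotic of the form $p_n^{(a,b)}(x)=A_n\,x^{1/2}J_a(N_nx)+E_n(x)$, valid uniformly for $0<x\le\pi/2$, where $N_n:=n+(a+b+1)/2$, $A_n\simeq(n+1)^{1/2}$, and $E_n$ is of one lower order (this is the Bessel form of the classical Darboux/Hilb expansion, see \cite{Szego}; it degenerates to the uniform bound \eqref{eq:unif-bound} but keeps the phase $N_nx$). Setting $D_n:=p_{n+2}^{(a,b)}-p_{n}^{(a,b)}$ and adding and subtracting,
\[
D_n=A_n\,x^{1/2}\big(J_a(N_{n+2}x)-J_a(N_nx)\big)+(A_{n+2}-A_n)\,x^{1/2}J_a(N_{n+2}x)+\big(E_{n+2}-E_n\big).
\]
The first term carries the gain: since $N_{n+2}-N_n=2$, the mean value theorem gives $J_a(N_{n+2}x)-J_a(N_nx)=2x\,J_a'(\theta x)$ for some $\theta\in(N_n,N_{n+2})$, producing the crucial extra factor $x$, while $A_{n+2}-A_n=O((n+1)^{-1/2})$ supplies the same gain in the second term.

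I would then feed in the elementary bounds $|J_a(s)|+|J_a'(s)|\le C s^{a-1}\cdot s$ for $0<s\le1$ and $|J_a(s)|+|J_a'(s)|\le C s^{-1/2}$ for $s\ge1$. For $0<x<1/(n+1)$ one has $N_nx\le C$, so the first two terms are $\lesssim (n+1)^{1/2}x^{1/2}\cdot x\,(N_nx)^{a-1}+(n+1)^{-1/2}x^{1/2}(N_nx)^{a}\lesssim (n+1)^{a-1/2}x^{a+1/2}$, which is exactly the first case because $\sin\frac x2\simeq x$ there. For $1/(n+1)\le x\le\pi/2$ one has $N_nx\ge c$, and the same two terms give $\lesssim x\simeq\sin\frac x2\cos\frac x2$ (here $\cos\frac x2\simeq1$), the middle case. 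Note that this single mechanism treats the endpoint zone $I_1$ and the transition $x\simeq1/(n+1)$ simultaneously, which is why it is preferable to estimating the two indices separately.

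The main obstacle is the control of the error difference $E_{n+2}-E_n$: a termwise application of \eqref{eq:unif-bound} destroys precisely the cancellation responsible for the extra $(n+1)^{-1}$, so one must show that $E_n$ and its increment in $n$ are themselves dominated by the right-hand side of \eqref{eq:bound-diff}, uniformly across the change of regime at $x\simeq 1/(n+1)$. An alternative that uses only the tools already at hand is the ladder relation \eqref{eq:der-pn}: $D_n$ solves $\Psi D_n=R_n$, where $R_n$ is a difference of two $(a+1,b+1)$-functions of indices $n+1$ and $n-1$, and integrating against the factor $(\sin\frac x2)^{-(a+1/2)}(\cos\frac x2)^{-(b+1/2)}$ recovers $D_n$ from $R_n$; but the same cancellation then resurfaces inside $R_n$, so whichever route is taken, capturing this cancellation is the heart of the argument.
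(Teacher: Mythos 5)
Your reduction to $0<x\le\pi/2$ via $p_n^{(a,b)}(\pi-z)=(-1)^np_n^{(b,a)}(z)$ is exactly the paper's first step, and your main-term computation (mean value theorem on the Bessel phase, $N_{n+2}-N_n=2$, plus $A_{n+2}-A_n=O((n+1)^{-1/2})$, the analogue of \eqref{eq:asym-ratio}) is sound. But the proof is not closed: you name the control of $E_{n+2}-E_n$ as ``the main obstacle'' and then leave it, ending with the admission that ``capturing this cancellation is the heart of the argument'' in whichever route one takes. That heart is precisely what is missing. The paper supplies it with an exact contiguous relation (from \cite[18.9.6]{NIST}),
\[
(n+1)\bigl(P_{n+1}^{(a,b)}(z)-P_{n}^{(a,b)}(z)\bigr)=-\tfrac{2n+a+b+2}{2}(1-z)P_n^{(a+1,b)}(z)+aP_n^{(a,b)}(z),
\]
which, after writing the difference as in \eqref{eq:lem-1} and telescoping $P_{n+2}-P_n=(P_{n+2}-P_{n+1})+(P_{n+1}-P_n)$, produces the gain $1/(n+1)$ and the factor $1-\cos x\simeq(\sin x/2)^2$ in closed form; one then only needs the crude uniform bound \eqref{eq:unif-bound} applied to $p_n^{(a+1,b)}$ and $p_n^{(a,b)}$. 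No asymptotics and no cancellation in an error term are required.

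For what it is worth, your route is completable, and your fear about the error terms is unfounded: if you take the Hilb-type formula in the precise form of \cite[Theorem 8.21.12]{Szego}, the error (after multiplying back by $w_n(\sin x/2)^{1/2}(\cos x/2)^{1/2}$) is $O(n^{a+1/2}x^{a+5/2})$ for $0<x\le c/(n+1)$ and $O(n^{-1}x)$ for $c/(n+1)\le x\le\pi/2$, and each of these is \emph{termwise} dominated by the corresponding right-hand side of \eqref{eq:bound-diff} (by factors $nx^2\le n^{-1}$ and $n^{-1}$ respectively), so $|E_{n+2}|+|E_n|$ suffices and no cancellation between $E_{n+2}$ and $E_n$ is needed. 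Had you quoted that error bound and run this check, the argument would stand as a genuinely different (asymptotic) proof. As written, however, the step that produces the extra factor of $(n+1)^{-1}$ is asserted to exist rather than established, so the proposal does not yet prove the lemma. (A cosmetic point: you restrict to $a,b\ge-1/2$ in the reflection step, but the lemma is stated for $a,b>-1$ and the reflection identity needs no such restriction.)
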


\begin{lem}
\label{lem:bound-diff-der}
  Let $n\in\mathbb{N}$ and $a,b> -1$, then
\begin{multline}
\label{eq:bound-diff-der}
|(p_{n+2}^{(a,b)}-p_{n}^{(a,b)})'(x)|\\\le C
\begin{cases}
(n+1)^{a-1/2}(\sin x/2)^{a-1/2}, & 0<x<1/(n+1),\\
(n+1)\sin x/2\cos x/2, & 1/(n+1)\le x \le  \pi-1/(n+1),\\
(n+1)^{b-1/2}(\cos x/2)^{b-1/2}, &\pi-1/(n+1)<x<\pi.
\end{cases}
\end{multline}
\end{lem}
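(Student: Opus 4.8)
The plan is to reduce everything to the first-order derivative formula \eqref{eq:der-pn} and then ride on Lemma~\ref{lem:bound-diff}. Writing $c_k=\sqrt{k(k+a+b+1)}$ and applying \eqref{eq:der-pn} to both $p_{n+2}^{(a,b)}$ and $p_n^{(a,b)}$, I would split
\[
(p_{n+2}^{(a,b)}-p_n^{(a,b)})'(x)=A(x)+B(x),
\]
where
\[
A(x)=-c_{n+2}\bigl(p_{n+1}^{(a+1,b+1)}(x)-p_{n-1}^{(a+1,b+1)}(x)\bigr)-(c_{n+2}-c_n)\,p_{n-1}^{(a+1,b+1)}(x)
\]
and
\[
B(x)=\Bigl(\tfrac{2a+1}{4}\cot\tfrac{x}{2}-\tfrac{2b+1}{4}\tan\tfrac{x}{2}\Bigr)\bigl(p_{n+2}^{(a,b)}(x)-p_n^{(a,b)}(x)\bigr).
\]
(For $n=0$ the lowered-index term is absent by \eqref{eq:der-pn0} and the single remaining function is bounded directly by \eqref{eq:unif-bound}; so I assume $n\ge1$.) The point of the telescoping in $A$ is that $p_{n+1}^{(a+1,b+1)}-p_{n-1}^{(a+1,b+1)}$ is exactly a difference covered by Lemma~\ref{lem:bound-diff}, now at index $n-1$ with parameters $(a+1,b+1)$, while the leftover coefficient is harmless: rationalizing, $c_{n+2}^2-c_n^2=4n+2(a+b+3)$ grows linearly and $c_{n+2}+c_n\sim 2n$ cancels it, so $|c_{n+2}-c_n|\le C$.

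With this in hand I would estimate $A$ and $B$ region by region, noting first that the third region reduces to the first through the reflection $x\mapsto\pi-x$, under which $p_n^{(a,b)}(\pi-x)=(-1)^n p_n^{(b,a)}(x)$; this interchanges $a\leftrightarrow b$ and $\sin(x/2)\leftrightarrow\cos(x/2)$ and sends the region~III target to the region~I target. In the first region $0<x<1/(n+1)$ (so also $x<1/n$, keeping us in the first case of the lemma at index $n-1$), Lemma~\ref{lem:bound-diff} and \eqref{eq:unif-bound} give $|A(x)|\le C(n+1)^{a+3/2}(\sin x/2)^{a+3/2}$, and the desired bound is recovered since the surplus factor is $\bigl((n+1)\sin(x/2)\bigr)^2\le C$ there. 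For $B$ I would use $\cot(x/2)\le C(\sin x/2)^{-1}$ together with the first case of \eqref{eq:bound-diff}, which yields exactly $C(n+1)^{a-1/2}(\sin x/2)^{a-1/2}$.

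In the middle region $1/(n+1)\le x\le\pi-1/(n+1)$ the governing observation is that the target is bounded below by a positive constant, because $(n+1)\sin(x/2)\cos(x/2)=\tfrac{n+1}{2}\sin x\ge\tfrac{1}{\pi}$ throughout; hence any $O(1)$ summand is automatically dominated. For $B$ this is clean: the middle case of \eqref{eq:bound-diff} gives $|p_{n+2}^{(a,b)}-p_n^{(a,b)}|\le C\sin(x/2)\cos(x/2)$, and multiplying by $\cot(x/2)$ or $\tan(x/2)$ collapses to $\cos^2(x/2)$ or $\sin^2(x/2)$, so $|B(x)|\le C$. For $A$, the piece $(c_{n+2}-c_n)p_{n-1}^{(a+1,b+1)}$ is $O(1)$ by \eqref{eq:unif-bound}, while on the sub-interval $1/n\le x\le\pi-1/n$ the piece $c_{n+2}\bigl(p_{n+1}^{(a+1,b+1)}-p_{n-1}^{(a+1,b+1)}\bigr)$ is bounded by $C(n+1)\sin(x/2)\cos(x/2)$ directly from the middle case of Lemma~\ref{lem:bound-diff}.

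I expect the only genuinely delicate point to be the mismatch between the cutoff $1/(n+1)$ defining the three regions and the cutoff $1/n$ that governs the lemma when applied at index $n-1$: on the thin strips $1/(n+1)\le x<1/n$ and $\pi-1/n<x\le\pi-1/(n+1)$ one sits in the middle region of the statement but in a boundary case of the applied lemma. There I would invoke the first-case estimate of Lemma~\ref{lem:bound-diff}, obtaining $C\bigl((n+1)\sin(x/2)\bigr)^{a+3/2}$, and observe that $(n+1)\sin(x/2)\in[\tfrac1\pi,C]$ on such a strip, so the whole term is $O(1)$ and thus dominated by the constant-bounded-below target. Assembling the $A$- and $B$-estimates over the three regions, together with the reflection symmetry for the third, completes the proof.
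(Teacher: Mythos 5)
Your proposal is correct and follows essentially the same route as the paper: both apply \eqref{eq:der-pn} (and \eqref{eq:der-pn0} for $n=0$) to split the derivative of the difference into the multiplier term $S_1$, the telescoped difference $\sqrt{(n+2)(n+a+b+3)}\,(p_{n+1}^{(a+1,b+1)}-p_{n-1}^{(a+1,b+1)})$ handled by Lemma~\ref{lem:bound-diff} at shifted index and parameters, and the $O(1)$-coefficient term, then bound region by region and use the reflection $p_n^{(a,b)}(\pi-z)=(-1)^np_n^{(b,a)}(z)$ for the remaining range. Your explicit treatment of the $1/(n+1)$ versus $1/n$ cutoff mismatch is a point the paper passes over silently, but it does not change the argument.
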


\begin{proof}[Proof of Proposition \ref{propo:reg}]
We focus on the proof of \eqref{eq:reg-cond-1} because \eqref{eq:reg-cond-2} can be deduced in a similar way.

We will consider two cases $n>m$ and $m>n$. For the first one, we decompose the difference $K_{\alpha,\beta}^{\gamma,\delta}(n+2,m)-K_{\alpha,\beta}^{\gamma,\delta}(n,m)$ according to the intervals $I_1=(0,1/(n+1))$, $I_2=[1/(n+1),\pi-1/(n+1)]$, and $I_3=(\pi-1/(n+1),\pi)$, and we denote the corresponding integrals by $\mathcal{K}_1(n,m)$, $\mathcal{K}_2(n,m)$, and $\mathcal{K}_3(n,m)$.  From \eqref{eq:unif-bound}, \eqref{eq:bound-diff}, and the facts that $\alpha,\beta\geq-1/2$, we have that
\[
|\mathcal{K}_1(n,m)|\le C (n+1)^{\gamma-1/2}(m+1)^{\alpha+1/2}\int_{I_1}(\sin x/2)^{\gamma+\alpha+1}\, dx\le \frac{C}{n^2}
\]
and
\[
|\mathcal{K}_3(n,m)|\le C (n+1)^{\delta-1/2}(m+1)^{\beta+1/2}\int_{I_3}(\cos x/2)^{\delta+\beta+1}\, dx\le \frac{C}{n^2}.
\]
and this is enough for our purpose.

Now, in order to estimate $\mathcal{K}_2(n,m)$ we consider some situations. The cases a) $\lambda_{n}^{(\gamma,\delta)}=\lambda_{m}^{(\alpha,\beta)}$, b) $\lambda_{n}^{(\gamma,\delta)}\neq\lambda_{m}^{(\alpha,\beta)}$, $\lambda_{n+2}^{(\gamma,\delta)}=\lambda_{m}^{(\alpha,\beta)}$, and c) $\lambda_{n}^{(\gamma,\delta)}\neq\lambda_{m}^{(\alpha,\beta)}$, $\lambda_{n+2}^{(\gamma,\delta)}\neq\lambda_{m}^{(\alpha,\beta)}$, $\alpha+\beta\neq\gamma+\delta$, $n-m\leq|\alpha+\beta-\gamma-\delta|$ are easy to handle with.

Otherwise, to avoid cumbersome notations we denote
\[
\mathcal{J}(n,m)=\int_{I_2}W_{\gamma,\delta}^{\alpha,\beta}(x)(p_{n+2}^{(\gamma,\delta)}(x)-p_n^{(\gamma,\delta)}(x))p_m^{(\alpha,\beta)}(x)\, dx
\]
and
\[
\mathcal{S}(n,m)=U(p_{n+2}^{(\gamma,\delta)}-p_n^{(\gamma,\delta)},p_m^{(\alpha,\beta)})(x)\Big|_{x=1/(n+1)}^{x=\pi-1/(n+1)}.
\]
Then, using \eqref{eq:k-eigen} and following the notation given in the proof of Proposition~\ref{propo:size}, we have
\begin{align}
\label{eq:kernel-diff}
\mathcal{K}_2(n,m)&=\frac{S(n+2,m)+J(n+2,m)}{\lambda_{n+2}^{(\gamma,\delta)}-\lambda_{m}^{(\alpha,\beta)}}
-\frac{S(n,m)+J(n,m)}{\lambda_n^{(\gamma,\delta)}-\lambda_{m}^{(\alpha,\beta)}}\\\notag
&=\frac{\mathcal{S}(n,m)+\mathcal{J}(n,m)}{\lambda_{n+2}^{(\gamma,\delta)}-\lambda_{m}^{(\alpha,\beta)}}
-\frac{(\lambda_{n+2}^{(\gamma,\delta)}-\lambda_{n}^{(\gamma,\delta)})(S(n,m)+J(n,m))}
{(\lambda_{n+2}^{(\gamma,\delta)}-\lambda_{m}^{(\alpha,\beta)})(\lambda_n^{(\gamma,\delta)}-\lambda_{m}^{(\alpha,\beta)})}
\\\notag
&=\frac{\mathcal{S}(n,m)+\mathcal{J}(n,m)}{\lambda_{n+2}^{(\gamma,\delta)}-\lambda_{m}^{(\alpha,\beta)}}
-\frac{2(2n+\gamma+\delta+3)(S(n,m)+J(n,m))}
{(\lambda_{n+2}^{(\gamma,\delta)}-\lambda_{m}^{(\alpha,\beta)})(\lambda_n^{(\gamma,\delta)}-\lambda_{m}^{(\alpha,\beta)})}.
\end{align}

The terms $S(n,m)$ and $J(n,m)$ can be treated proceeding as in Proposition \ref{propo:size}, so we obtain
\begin{equation}
\label{eq:kernel-diff-1}
\left|\frac{2(2n+\gamma+\delta+3)(S(n,m)+J(n,m))}
{(\lambda_{n+2}^{(\gamma,\delta)}-\lambda_{m}^{(\alpha,\beta)})(\lambda_n^{(\gamma,\delta)}-\lambda_{m}^{(\alpha,\beta)})}\right|\le \frac{C}{|n-m|^2}.
\end{equation}

On the other hand, from \eqref{eq:unif-bound}, \eqref{eq:der-pn} (for $m\neq0$ or \eqref{eq:der-pn0} if $m=0$), \eqref{eq:bound-diff}, and \eqref{eq:bound-diff-der}, we deduce that
\[
|\mathcal{S}(n,m)|\le C.
\]
This fact implies the estimate
\begin{equation}
\label{eq:kernel-diff-11}
\left|\frac{\mathcal{S}(n,m)}{\lambda_{n+2}^{(\gamma,\delta)}-\lambda_{m}^{(\alpha,\beta)}}\right|\le \frac{C}{|n-m|^2}.
\end{equation}

Finally, in order to analyse $\mathcal{J}(n,m)$, we denote
\[
\overline{\mathcal{S}}(n,m)=U(p_m^{(\alpha,\beta)},W_{\gamma,\delta}^{\alpha,\beta}(p_{n+2}^{(\gamma,\delta)}-p_n^{(\gamma,\delta)}))(x)\Big|_{x=1/(n+1)}^{x=\pi-1/(n+1)},
\]
\[
T_1(n,m)=\int_{I_2} ((W_{\gamma,\delta}^{\alpha,\beta})^2(x)+(W_{\gamma,\delta}^{\alpha,\beta})''(x))(p_{n+2}^{(\gamma,\delta)}(x)-p_{n}^{(\gamma,\delta)}(x))p_m^{(\alpha,\beta)}(x)\, dx,
\]
and
\[
T_2(n,m)=\int_{I_2} (W_{\gamma,\delta}^{\alpha,\beta})'(x)(p_{n+2}^{(\gamma,\delta)}-p_{n}^{(\gamma,\delta)})'(x)p_m^{(\alpha,\beta)}(x)\, dx.
\]
Therefore, using \eqref{eq:parts-L} and the identity
\[
L^{\alpha,\beta}(W_{\gamma,\delta}^{\alpha,\beta}f)=W_{\gamma,\delta}^{\alpha,\beta}L^{\gamma,\delta}f-((W_{\gamma,\delta}^{\alpha,\beta})^2
+(W_{\gamma,\delta}^{\alpha,\beta})'')f-2(W_{\gamma,\delta}^{\alpha,\beta})'f',
\]
we can deduce that
\begin{align*}
\lambda_m^{(\alpha,\beta)}\mathcal{J}(n,m)&=\int_{I_2} W_{\gamma,\delta}^{\alpha,\beta}(x)(p_{n+2}^{(\gamma,\delta)}(x)-p_{n}^{(\gamma,\delta)}(x))L^{\alpha,\beta}p_m^{(\alpha,\beta)}(x)\, dx\\
&=
\overline{\mathcal{S}}(n,m)+\int_{I_2} L^{\alpha,\beta}(W_{\gamma,\delta}^{\alpha,\beta}(p_{n+2}^{(\gamma,\delta)}-p_{n}^{(\gamma,\delta)}))(x)p_m^{(\alpha,\beta)}(x)\, dx\\
&=
\overline{\mathcal{S}}(n,m) +\int_{I_2} W_{\gamma,\delta}^{\alpha,\beta}(x)L^{\gamma,\delta}(p_{n+2}^{(\gamma,\delta)}-p_{n}^{(\gamma,\delta)})(x)p_m^{(\alpha,\beta)}(x)\, dx\\
&\quad-T_1(n,m)-2T_2(n,m)
\end{align*}
Now note that
\[
L^{\gamma,\delta}(p_{n+2}^{(\gamma,\delta)}-p_{n}^{(\gamma,\delta)})(x)=\lambda_{n+2}^{(\gamma,\delta)}(p_{n+2}^{(\gamma,\delta)}(x)-p_{n}^{(\gamma,\delta)}(x))
+(\lambda_{n+2}^{(\gamma,\delta)}-\lambda_{n}^{(\gamma,\delta)})p_{n}^{(\gamma,\delta)}(x)
\]
and hence
\begin{multline*}
\lambda_m^{(\alpha,\beta)}\mathcal{J}(n,m)=\overline{\mathcal{S}}(n,m)+\lambda_{n+2}^{(\gamma,\delta)}\mathcal{J}(n,m)\\
+(\lambda_{n+2}^{(\gamma,\delta)}-\lambda_n^{(\gamma,\delta)})J(n,m)-T_1(n,m)-2T_2(n,m).
\end{multline*}
Thus,
\begin{equation}\label{eq:kernel-diff-2}
\frac{\mathcal{J}(n,m)}{\lambda_{n+2}^{(\gamma,\delta)}-\lambda_{m}^{(\alpha,\beta)}}=
\frac{-\overline{\mathcal{S}}(n,m)-2(2n+\gamma+\delta+3)J(n,m)+T_1(n,m)+2T_2(n,m)}{(\lambda_{n+2}^{(\gamma,\delta)}-\lambda_m^{(\alpha,\beta)})^2}.
\end{equation}

It is again straightforward from Proposition \ref{propo:size} to deduce that
\[
\left|\frac{2(2n+\gamma+\delta+3)}{(\lambda_m^{(\alpha,\beta)}-\lambda_{n+2}^{(\gamma,\delta)})^2}J(n,m)\right|\le \frac{C}{|n-m|^2}.
\]
In this way, by \eqref{eq:kernel-diff}, \eqref{eq:kernel-diff-1}, \eqref{eq:kernel-diff-11}, and \eqref{eq:kernel-diff-2}, showing that
\[
|\overline{\mathcal{S}}(n,m)|+|T_1(n,m)|+|T_2(n,m)|\le Cn^2
\]
the proof of \eqref{eq:reg-cond-1}  for $n>m$ will be completed.

We use \eqref{eq:unif-bound}, \eqref{eq:der-pn} (for $m\neq0$ or \eqref{eq:der-pn0} if $m=0$), \eqref{eq:bound-diff}, and \eqref{eq:bound-diff-der}, and then,
\[
|\overline{\mathcal{S}}(n,m)|\le C n^2.
\]
In order to estimate $T_1(n,m)$ and $T_2(n,m)$ we will focus on the interval $[1/(n+1),\pi/2]$ because the complementary interval $(\pi/2,\pi-1/(n+1)]$ can be studied in a similar way. The corresponding integrals over $(1/(n+1),\pi/2]$ will be denoted by $\mathcal{T}_1(n,m)$ and $\mathcal{T}_2(n,m)$. First, using that
\[
(W_{\gamma,\delta}^{\alpha,\beta})^2(x)+|(W_{\gamma,\delta}^{\alpha,\beta})''(x)|\le C (\sin x/2)^{-4}, \qquad x\in [1/(n+1),\pi/2],
\]
we have
\[
T_1(n,m)|\le C\int_{1/(n+1)}^{\pi/2}|p_{n+2}^{(\gamma,\delta)}(x)-p_n^{(\gamma,\delta)}(x)||p_m^{(\alpha,\beta)}(x)|\frac{dx}{(\sin x/2)^4}.
\]
Then, using \eqref{eq:unif-bound}, \eqref{eq:bound-diff}, and the restriction $\alpha\ge -1/2$, we obtain that
\begin{align*}
|T_1(n,m)|
&\le C \left((m+1)^{\alpha+1/2} \int_{1/(n+1)}^{1/(m+1)} (\sin x/2)^{\alpha-5/2}\, dx+ \int_{1/(m+1)}^{\pi/2}\frac{dx}{(\sin x/2)^3}\right)\\&\le
C\int_{1/(n+1)}^{\pi/2}\frac{dx}{x^3}\le C n^2.
\end{align*}
Finally, using the estimate
\[
|(W_{\gamma,\delta}^{\alpha,\beta})'(x)|\le C (\sin x/2)^{-3}, \qquad x\in [1/(n+1),\pi/2],
\]
\eqref{eq:unif-bound}, \eqref{eq:bound-diff-der}, and $\alpha\ge -1/2$,
we deduce that
\begin{align*}
|T_2(n,m)|
&\le C (n+1) \left((m+1)^{\alpha+1/2} \int_{1/(n+1)}^{1/(m+1)} (\sin x/2)^{\alpha-3/2}\, dx\right.\\&\kern70pt\left.+ \int_{1/(m+1)}^{\pi/2}\frac{dx}{(\sin x/2)^2}\right)\\&\le
C (n+1)\int_{1/(n+1)}^{\pi/2}\frac{dx}{x^2}\le C n^2.
\end{align*}

We deal now with the case $m>n$. At this point, we find convenient to reset the notation taken for the case $n>m$.

Again, we decompose the difference $K_{\alpha,\beta}^{\gamma,\delta}(n+2,m)-K_{\alpha,\beta}^{\gamma,\delta}(n,m)$ into three integrals, $\mathcal{K}_{1}(n,m)$, $\mathcal{K}_{2}(n,m)$, and $\mathcal{K}_{3}(n,m)$ over the intervals $I_1=(0,1/(n+1))$, $I_2=[1/(n+1),\pi-1/(n+1)]$, and $I_3=(\pi-1/(n+1),\pi)$, respectively.

Using \eqref{eq:unif-bound}, \eqref{eq:bound-diff}, and the fact that $m/2\leq n\leq 2m$, we have that
\begin{multline*}
|\mathcal{K}_{1}(n,m)| \leq C (n+1)^{\gamma-1/2}\Big((m+1)^{\alpha+1/2}\int_{0}^{1/(m+1)} (\sin x/2)^{\gamma+\alpha+1}\,dx \\+ \int_{1/(m+1)}^{1/(n+1)} (\sin x/2)^{\gamma+1/2}\,dx\Big) \leq \frac{C}{m^{2}},
\end{multline*}
and
\begin{multline*}
|\mathcal{K}_{3}(n,m)| \leq C (n+1)^{\delta-1/2}\Big(\int_{\pi-1/(n+1)}^{\pi-1/(m+1)} (\cos x/2)^{\delta+1/2}\,dx \\+ (m+1)^{\beta+1/2}\int_{\pi-1/(m+1)}^{\pi} (\cos x/2)^{\delta+\beta+1}\,dx\Big) \leq \frac{C}{m^{2}}.
\end{multline*}

We analyse now $\mathcal{K}_{2}(n,m)$. Again, the cases a) $\lambda_{n}^{(\gamma,\delta)}=\lambda_{m}^{(\alpha,\beta)}$, b) $\lambda_{n}^{(\gamma,\delta)}\neq\lambda_{m}^{(\alpha,\beta)}$, $\lambda_{n+2}^{(\lambda,\delta)}=\lambda_{m}^{(\alpha,\beta)}$, c) $\lambda_{n}^{(\gamma,\delta)}\neq\lambda_{m}^{(\alpha,\beta)}$, $\lambda_{n+2}^{(\lambda,\delta)}\neq\lambda_{m}^{(\alpha,\beta)}$, $\alpha+\beta\neq\gamma+\delta+4$, $m-n\leq|\alpha+\beta-\gamma-\delta-4|$ can be treated in an elementary way.

Otherwise, note first that \eqref{eq:kernel-diff} still holds for $m>n$.
Following the same procedure as in the proof of Proposition~\ref{propo:size} we have that
\begin{equation*}
|S(n,m)| \leq C(n+m).
\end{equation*}
The bound \eqref{eq:unif-bound} leads to
\begin{equation*}
|J(n,m)| \leq C\int_{1/(n+1)}^{\pi/2} (\sin x/2)^{-2}\,dx \leq C(n+1).
\end{equation*}
Thus, if $n=0$, $|J(n,m)|\leq C$ and if $n>0$, $|J(n,m)|\leq Cn$.

Furthermore, the use of \eqref{eq:unif-bound}, \eqref{eq:der-pn}, \eqref{eq:bound-diff}, \eqref{eq:bound-diff-der}, and the fact that $m/2\leq n\leq 2m$ imply that
\begin{equation*}
|\mathcal{S}(n,m)| \leq C.
\end{equation*}

Now we analyse $\mathcal{J}(n,m)$. We note that \eqref{eq:kernel-diff-2} still holds in this case. We have already studied the term $J(n,m)$ so we will focus on the remaining ones. By \eqref{eq:unif-bound}, \eqref{eq:der-pn} (for $m\neq0$ or \eqref{eq:der-pn0} if $m=0$), \eqref{eq:bound-diff}, and \eqref{eq:bound-diff-der}, we have that
\begin{equation*}
|\overline{\mathcal{S}}(n,m)| \leq C m^{2}.
\end{equation*}
Finally, treating $T_{1}(n,m)$ and $T_{2}(n,m)$ as in the case $n>m$ we obtain that $|T_{1}(n,m)|\leq C m^{2}$ and $|T_{2}(n,m)|\leq C m^{2}$.

From all of these bounds, \eqref{eq:reg-cond-1} follows for $m>n$ and then the proof is completed.
\end{proof}

\section{Proof of Lemmas \ref{lem:bound-diff} and \ref{lem:bound-diff-der}}
\begin{proof}[Proof of Lemma \ref{lem:bound-diff}] It is enough the result for $0<x\le \pi/2$ because for $\pi/2<x<\pi$ we can use the identity $p_n^{(a,b)}(\pi-z)=(-1)^n p_n^{(b,a)}(z)$, for $0<z<\pi$ to obtain the required inequality. That is, we will prove that for $0<x\leq\pi/2$,
\begin{multline}
\label{eq:bound-diff-2}
|p_{n+2}^{(a,b)}(x)-p_{n}^{(a,b)}(x)|\\\le C
\begin{cases}
(n+1)^{a-1/2}(\sin x/2)^{a+1/2}(\cos x/2)^{b+1/2}, & 0<x<1/(n+1),\\
\sin x/2, & 1/(n+1)\le x \le \pi/2.
\end{cases}
\end{multline}

From the definition of $p_{n}^{(a,b)}$ we have
\begin{multline}
\label{eq:lem-1}
p_{n+2}^{(a,b)}(x)-p_n^{(a,b)}(x)=\left(\frac{w_{n+2}^{(a,b)}}{w_n^{(a,b)}}-1\right)p_n^{(a,b)}(x)\\+(\sin x/2)^{a+1/2}(\cos x/2)^{b+1/2}w_{n+2}^{(a,b)}(P_{n+2}^{(a,b)}(\cos x)-P_{n}^{(a,b)}(\cos x)).
\end{multline}
By using that
\begin{equation}
\label{eq:asym-ratio}
\left|\frac{w_{n+2}^{(a,b)}}{w_n^{(a,b)}}-1\right|\leq \frac{C}{n+1}
\end{equation}
and the estimate \eqref{eq:unif-bound} (note that $1/(n+1)\le C \sin x/2$ if $1/(n+1)\leq x\leq\pi/2$), we conclude that
\begin{multline}
\label{eq:lem-2}
\left|\left(\frac{w_{n+2}^{(a,b)}}{w_n^{(a,b)}}-1\right)p_n^{(a,b)}(x)\right|
\\\le C \begin{cases}
(n+1)^{a-1/2}(\sin x/2)^{a+1/2}(\cos x/2)^{b+1/2}, & 0<x<1/(n+1),\\
\sin x/2\cos x/2, & 1/(n+1)\le x \le \pi/2.
\end{cases}
\end{multline}

For the remaining part on the right hand side of \eqref{eq:lem-1} we use the following identity (obtained from \cite[18.9.6]{NIST})
\[
-\frac{2n+a+b+2}{2}(1-z)P_n^{(a+1,b)}(z)+aP_n^{(a,b)}(z)=(n+1)(P_{n+1}^{(a,b)}(z)-P_{n}^{(a,b)}(z))
\]
for $z\in (-1,1)$. We obtain by means of it that
\begin{multline}
(\sin x/2)^{a+1/2}(\cos x/2)^{b+1/2}w_{n+2}^{(a,b)}|P_{n+1}^{(a,b)}(\cos x)-P_{n}^{(a,b)}(\cos x)|\\\le \frac{(2n+a+b+2)}{n+1}\frac{w_{n+2}^{(a,b)}}{w_{n}^{(a+1,b)}}(\sin x/2)|p_n^{(a+1,b)}(x)|+\frac{|a|}{n+1}\frac{w_{n+2}^{(a,b)}}{w_{n}^{(a,b)}}|p_n^{(a,b)}(x)|.
\end{multline}
In this way, \eqref{eq:unif-bound} implies that the latter is bounded by
\begin{equation}
\label{eq:lem-3}
C
\begin{cases}
(n+1)^{a-1/2}(\sin x/2)^{a+1/2}, & 0<x<1/(n+1),\\
\sin x/2, & 1/(n+1)\le x \le \pi/2,
\end{cases}
\end{equation}
The same bound holds for the term
\begin{equation*}
(\sin x/2)^{a+1/2}(\cos x/2)^{b+1/2}w_{n+2}^{(a,b)}|P_{n+2}^{(a,b)}(\cos x)-P_{n+1}^{(a,b)}(\cos x)|.
\end{equation*}

Then, \eqref{eq:bound-diff-2} follows from \eqref{eq:lem-1}, \eqref{eq:lem-2}, and \eqref{eq:lem-3}.
\end{proof}

\begin{proof}[Proof of Lemma \ref{lem:bound-diff-der}]
First, we suppose that $0<x\leq\pi/2$ and we prove that
\begin{multline}
\label{eq:bound-diff-der-2}
|(p_{n+2}^{(a,b)}-p_{n}^{(a,b)})'(x)|\\\le C
\begin{cases}
(n+1)^{a-1/2}(\sin x/2)^{a-1/2}, & 0<x<1/(n+1),\\
(n+1)\sin x/2\cos x/2, & 1/(n+1)\le x \le  \pi/2.
\end{cases}
\end{multline}

If $n\neq0$, by using \eqref{eq:der-pn}, we get
\[
(p_{n+2}^{(a,b)}-p_n^{(a,b)})'(x)=S_1(n,a,b,x)-S_2(n,a,b,x)-S_3(n,a,b,x),
\]
where
\[
S_1(n,a,b,x)=\left(\frac{2a+1}{4}\cot\frac{x}{2}-\frac{2b+1}{4}\tan\frac{x}{2}\right)(p_{n+2}^{(a,b)}(x)-p_n^{(a,b)}(x)),
\]
\[
S_2(n,a,b,x)=\sqrt{(n+2)(n+a+b+3)}(p_{n+1}^{(a+1,b+1)}(x)-p_{n-1}^{(a+1,b+1)}(x)),
\]
and
\[
S_3(n,a,b,x)=(\sqrt{(n+2)(n+a+b+3)}-\sqrt{n(n+a+b+1)})p_{n-1}^{(a+1,b+1)}(x).
\]

The following bounds are a consequence of \eqref{eq:unif-bound} and \eqref{eq:bound-diff}.
\begin{equation}
|S_1(n,a,b,x)|\\\le C \begin{cases}
(n+1)^{a-1/2}(\sin x/2)^{a-1/2}, & 0<x<1/(n+1),\\
\cos x/2, & 1/(n+1)\le x \le \pi/2,
\end{cases}
\end{equation}
\[
|S_2(n,a,b,x)|\le C \begin{cases}
(n+1)^{a+3/2}(\sin x/2)^{a+3/2}, & 0<x<1/(n+1),\\
n\sin x/2\cos x/2, & 1/(n+1)\le x \le \pi/2,
\end{cases}
\]
and
\[
|S_3(n,a,b,x)|\le C \begin{cases}
n^{a+3/2}(\sin x/2)^{a+3/2}, & 0<x<1/(n+1),\\
1, & 1/(n+1)\le x \le \pi/2,
\end{cases}
\]
and the result follows. If $n=0$, we use \eqref{eq:der-pn0} instead of \eqref{eq:der-pn} and the result is also valid.

In the case $\pi/2<x<\pi$ we use again the identity $p_n^{(a,b)}(\pi-z)=(-1)^n p_n^{(b,a)}(z)$, with $0<z<\pi$, to deduce the bound and the proof is completed.
\end{proof}




\begin{thebibliography}{50}
\bibitem{ABetal}  V. Almeida, J. J. Betancor, A. J. Castro, A. Sanabria, and R. Scotto, Variable exponent Sobolev spaces associated with Jacobi expansions, \textit{J. Math. Inequal.} \textbf{11} (2017), 901--946.

\bibitem{ACL-JacI} A. Arenas, \'O. Ciaurri, and E. Labarga, Discrete harmonic analysis associated with Jacobi expansions I: the heat semigroup, preprint, arXiv:1806.00056 (2018).

\bibitem{DAskey} R. Askey, A transplantation theorem for Jacobi coefficients, \textit{Pacific J. Math.} \textbf{21} (1967), 393--404.

\bibitem{CAskey} R. Askey, A transplantation theorem for Jacobi series, \textit{Ill. J. Math.} \textbf{13} (1969), 583--590.

\bibitem{CAskeyWainger} R. Askey and R. Wainger, A transplantation theorem between ultraspherical series, \textit{Ill. J. Math.} \textbf{10} (1966), 322--344.

\bibitem{DAskeyWainger} R. Askey and R. Wainger, A transplantation theorem for ultraspherical coefficients, \textit{Pacific J. Math.} \textbf{16} (1966), 393--405.

\bibitem{Bet-et-al} J. J. Betancor, A. J. Castro, J. C. Fari\~na, and L. Rodr\'{\i}guez-Mesa, Discrete harmonic analysis associated with ultraspherical expansions, preprint, arXiv: 1512.01379 (2015).



\bibitem{CNS} \'O. Ciaurri, A. Nowak, and K. Stempak, Jacobi transplantation revisited, \textit{Math. Z.} \textbf{257} (2007), 355--380.

\bibitem{Guy} D. L. Guy, Hankel multiplier transformations and weighted $p$-norms, \textit{Trans. Amer. Math. Soc.} \textbf{95} (1960), 137--189.

\bibitem{Ker} R. A. Kerman, Strong and weak weighted convergence of Jacobi series, \textit{J. Approx. Theory} \textbf{88} (1997), 1--27.

\bibitem{Muckenhoupt} B. Muckenhoupt, Transplantation theorems and multiplier theorems for Jacobi series, \textit{Mem. Amer. Math. Soc.} \textbf{64} (1986).


\bibitem{Now-Stem} A. Nowak and K. Stempak, $L^2$-theory of Riesz transforms for orthogonal expansions, \textit{J. Fourier Anal. Appl.} \textbf{12}  (2006), 675--711.

\bibitem{NIST} F. W. J. Olver (editor-in-chief), \textit{NIST Handbook of Mathematical Functions},
Cambridge University Press, New York, 2010.


\bibitem{Stempaksurvey} K. Stempak, Transplantation theorems--A survey, \textit{J. Fourier Anal. Appl.} \textbf{17}  (2011), 408--430.

\bibitem{Stempak} K. Stempak,  A transplantation theorem for Fourier-Bessel coefficients, \textit{Anal. Math.} \textbf{24} (1998), 311--318. .

\bibitem{Szego} G. Szeg\H{o}, \textit{Orthogonal polynomials}, American Mathematical Society Providence, Rhode Island, 1975.
\end{thebibliography}
\end{document}